\begin{document}

\title{Convergence of interfaces in boundary reactions}
\author{Aditya Kumar}
\address{Department of Mathematics, Johns Hopkins University, 3400 N. Charles Street, Baltimore, MD 21218 , USA}
\email{akumar65@jhu.edu}

\begin{abstract}
In this paper we study the singular limit for critical points of boundary reactions 
\begin{equation*}
    (-\Delta)^{\frac{1}{2}}u = \frac{1}{\varepsilon}(u-u^3) \quad \text{in } U \subset \textbf{R}^n .
\end{equation*}
 We show the existence of a $(n-1)$-rectifiable energy concentration set. Furthermore, we show that the limit of the energy measures can be associated to a stationary, $(n-1)$-rectifiable varifold supported in the concentration set. This is analogous to a result of Hutchinson and Tonegawa for phase transitions.
\end{abstract}

\maketitle

\section{Introduction}

\subsection{Minimal surfaces and phase transitions} Minimal hypersurfaces are the critical points of the area functional. The relationship between minimal hypersurfaces and interfaces formed by double well phase transitions has been a subject of many works over the past few decades \cite{g,gg,ht,m,st,tw}. The latter are the critical points of the Allen-Cahn energy functional
\begin{equation} \label{ace}
    \frac{1}{2}\int_U |\nabla u|^2 + \frac{1}{\ve ^2} W(u) \,dx
\end{equation}
whose Euler-Lagrange equation is the elliptic Allen-Cahn equation 
\begin{equation} \label{ac}
    -\Delta u_{\ve} = -\frac{1}{\ve ^2} W'(u_{\ve})  \quad \text{in } U 
\end{equation}
here $u$ is the density, $U$ is a bounded domain, $W: \RR \to [0,\infty)$ is a double well potential with two minima $1$ and $-1$, which are the densities of the stable phases. In this paper $W(t)=\frac{(1-t^2)^2}{4}$. The potential term $W$ penalizes densities away from $\pm 1$, and the Sobolev energy term, $|\nabla u|^2$ penalizes oscillations, so a critical point usually has two regions where $u$ is $1$ and $-1$ respectively with a diffuse phase transition interface which can be seen as an $\ve$-neighbourhood of $u_{\ve}^{-1}(0)$-the zero level set. As $\ve \to 0$, it was expected that the phase transition interface will converge to a minimal hypersurface \cite{i}. We describe some of the results in this direction.

If $u_\ve$ are minimizers then as $\ve \to 0$, after passing to a subsequence, $u_\ve \to \chi_A - \chi_{A^c}$. Here the boundary $\partial A = \Sigma$ is an area minimizing hypersurface. This was shown in \cite{mm} by Modica and Mortola. Further, in \cite{cc}, Caffarelli and Cordoba proved uniform convergence of the level sets 
    $\{|\ue| \leq \delta\}$ to the minimal surface.

For critical points with bounded energy, but without any stability or minimizing condition, Hutchinson and Tonegawa \cite{ht} proved that as $\ve \to 0$ the energy measures converge to a stationary integer rectifiable $(n-1)$-varifold, i.e a generalized minimal hypersurface. Later in \cite{tw}, Tonegawa and Wickramesekara proved that if $\{\ue\}$ are stable critical points, then this limiting varifold is a stable minimal hypersurface.

This relationship between minimal surfaces and the Allen-Cahn equation has been used in the last decade in several advances in the theory minimal surfaces. We mention two: in \cite{g}, building on the works \cite{ht,tw} Guaraco gave a min-max construction of minimal surfaces based on Allen-Cahn equation as an alternative to the Almgren-Pitts theory \cite{p}. Chodosh and Mantoulidis proved the multiplicity one conjecture for minimal surfaces in $3$-manifolds in \cite{cm}.  

\subsection{Boundary reactions and nonlocal phase transitions} A natural variant of energy (\ref{ac}) analysed by Alberti, Bouchitté and Seppecher in \cite{abs}, and then extensively studied by Cabré and Solà-Morales in \cite{csm}, has the potential $W$ placed on the boundary of the euclidean half space, i.e. they consider the energy
\begin{equation} \label{hace}
   E_{\ve}(u)= \frac{1}{2}\int_{\RR^{n+1}_+} |\nabla u|^2 \,dx + \frac{1}{\ve}\int_{\partial \RR^{n+1}_+}  W(u) \mh
\end{equation}
The critical points of (\ref{hace}) satisfy the nonlinear Neumann problem 
 \begin{equation} \label{ext}
        \begin{cases}
            \Delta \ue = 0 \qquad &\text{in } \RR^{n+1}_+ \\
            \pdv{\ue}{\nu} = -\frac{1}{\ve}W'(u_\ve) \qquad  &\text{on  } \partial \RR^{n+1}_+ 
        \end{cases}
    \end{equation}
This equation appears in crystal dislocation \cite{gm,to}. It also occurs in analysis of vortices for soft thin films \cite{k}. This problem also has a nonlocal formulation on $\RR^n \cong \partial \RR^{n+1}_+$ as a fractional Allen-Cahn equation \cite{cs} 
\begin{equation}\label{hac}
    (-\Delta)^{1/2} u_{\ve} = -\frac{1}{\ve} W'(u_{\ve})  \quad \text{in } \RR^n
\end{equation}
where 
$$(-\Delta)^{1/2}u(x) = 2 {\, \rm p.v.} \int_{\RR^n} \frac{u(x)-u(y)}{|x-y|^{n+1}} \,dy$$

Seen this way, the problem appears when the effects of long-range interactions in phase transitions are studied through a nonlocal energy \cite{msw, sv,sv1,svp}. In energy functional (\ref{ace}) the $H^1$ Sobolev energy term $\int |\nabla u|^2 \,dx=\norm{u}_{H^1}^2$ is replaced by contribution from $U$ in the nonlocal energy $\norm{u}_{H^s}^2$ for $s\in(0,1)$, giving the energy functional 
\begin{equation*}
    \norm{u}_{H^s(U)}^2  + \frac{1}{\ve ^{2s}}\int_U  W(u) \,dx
\end{equation*}
The $H^s$ energy of $u$ is 
$$\norm{u}_{H^s}^2 =\int \int _{\RR^d \times \RR^d} \frac{|u(x)-u(y)|^2}{|x-y|^{n+2s}} \,dx\,dy  $$
and therefore the appropriate energy functional with $H^s$, instead of $H^1$ energy in (\ref{ace}),  is 
\begin{equation} \label{nace}
    \frac{1}{2}\int \int _{\RR^d \times \RR^d \backslash U^c \times U^c} \frac{|u(x)-u(y)|^2}{|x-y|^{n+2s}} \,dx\,dy  + \frac{1}{\ve ^{2s}}\int_U  W(u) \,dx
\end{equation}
 Note that here the boundary value of $u$ will be fixed in complement of $U$ and $U^c=\RR^n \backslash U$. The Euler-Lagrange equation is the fractional Allen-Cahn equation, i.e, equation (\ref{ac}) with the laplacian $-\Delta$ replaced by the fractional laplacian $(-\Delta)^s$, for $s\in (0,1)$
\begin{equation}\label{nacs}
    (-\Delta)^{s} u_{\ve} = -\frac{1}{\ve ^{2s}} W'(u_{\ve})  \quad \text{in } U, \quad s\in(0,1)
\end{equation}
with some boundary data in $U^c$. Thus one sees that the equation (\ref{hac}) is just the $s=1/2$ case in above, where 
$$(-\Delta)^{s}u(x)= c_{n,s}\,{\rm p.v.} \int_{\RR^n} \frac{u(x)-u(y)}{|x-y|^{n+2s}} \quad $$

\subsection{Minimal surfaces and boundary reactions} Given the geometric properties of the interface formed by the Allen-Cahn equation as highlighted above, it is of interest to understand the limit interface formed for nonlocal phase transitions as $\ve \to 0$. Here, there are two different outcomes depending on whether $s\in(0,1/2)$ or $s\in [1/2,1).$ The dichotomy is primarily there because the characteristic function of smooth sets are not in $H^s$ when $s\in[1/2,1)$. Whereas for $s\in (0,1/2)$ this is not the case, so as $\ve \to 0$, the $H^s$ energy remains uniformly bounded.

In case of minimizers, Savin and Valdinoci \cite{sv} proved that as $\ve \to 0$, after passing to a subsequence, $u_\ve \to \chi_A - \chi_{A^c}$. If $s\in(0,1/2)$, then the boundary of the set $A$, $\partial A = \Sigma$ is a minimizing  $H^s$-nonlocal minimal hypersurface, i.e. boundaries of sets minimizing the $H^s$ energy. They were first studied by Caffarelli, Roquejoffre and Savin in \cite{crs}.
    
Interestingly if $s\in[1/2,1)$, the limit object is same as in the classical case, i.e, the boundary of the set $A$, $\partial A = \Sigma$ is an area minimizing hypersurface. Analogous to \cite{cc} in the local case, Savin and Valdinoci also prove in \cite{sv1} the uniform convergence of level sets of $\ue$ to the non-local $(s<1/2)$  or local minimal surface $(s\geq1/2)$.
    
For critical points with bounded energy, but without the minimizing condition, only the case with $s\in (0,1/2)$ is known. Millot, Sire and Wang \cite{msw} proved that that the energies converge to a  stationary $H^s$ nonlocal minimal surface. 

One might hope that for $s \in [1/2,1)$, the energy measures of critical points with bounded energy will converge as in the classical case \cite{ht} to a stationary integer rectifiable $(n-1)$-varifold. However a monotonicity formula for the nonlocal energy (\ref{nace}) is known only for $s\in(0,1/2)$. See also the discussion in \cite{csv}. Still for $s=1/2$ we are able to show the convergence to a stationary rectifiable $(n-1)$ varifold. For this we view $u_{\ve}$ as a solution of the non-linear Neumann problem (\ref{ext}) instead of viewing $u_\ve$ as solutions to (\ref{hac}). We state the result below precisely for the half ball $\ball 1 = B_1(0) \cap \RR^{n+1}_+$, and will later specify the exact definition of admissible open sets in $\RR^{n+1}_+$.
\begin{theo} \label{maintheorem}
Let $\{u_\ve \}$ be critical points for the energy functional $E_\ve$ (\ref{hace}), in $\ball 1$ satisfying a uniform energy bound $E_{\ve}(u_\ve) \leq E_o < \infty$.  Then as $\ve \to 0$, there is a naturally associated stationary rectifiable $(n-1)$-varifold $V$, supported on the energy concentration set $\Sigma$ that forms on the boundary $\ball 1 \cap \{x_{n+1}=0 \}$.
\end{theo}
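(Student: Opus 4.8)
The plan is to run the Hutchinson--Tonegawa scheme \cite{ht} on the harmonic extension, the point being that although the Gagliardo energy (\ref{nace}) carries no monotonicity formula at $s=\tfrac12$ --- which is exactly why \cite{msw} is confined to $s<\tfrac12$ --- the \emph{local} energy $E_\ve$ on half-balls does. First I would record $|\ue|\le 1$, by the Hopf lemma together with the sign of $W'$ off $[-1,1]$ (and boundedness of the data on the free part of $\partial\ball 1$). Since $\ue$ is harmonic and bounded in $\ball 1\cap\{x_{n+1}>0\}$, interior estimates give $|\nabla\ue(x)|\le C\,\dist(x,\{x_{n+1}=0\})^{-1}$, so the bulk energy has vanishing $(n-1)$-dimensional density at every point with $x_{n+1}>0$; hence all $(n-1)$-dimensional concentration happens on $\ball 1\cap\{x_{n+1}=0\}$. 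Near that flat boundary, a blow-up at scale $\ve$ reduces the local picture to the one-dimensional transition layer $g$ of \cite{csm}. I then work with the energy measures $\mu_\ve=\tfrac12|\nabla\ue|^2\,dx$ on $\ball 1$ together with $\tfrac1\ve W(\ue)\,\cH^n$ on $\ball 1\cap\{x_{n+1}=0\}$, and a discrepancy measure $\xi_\ve$ recording the failure of equipartition between $\tfrac1\ve W(\ue)$ and the tangential gradient energy along $\{x_{n+1}=0\}$.

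\textbf{Step 2: the monotonicity formula.}
The core of the proof is a monotonicity formula for $\Theta_\ve(x_0,r):=r^{1-n}\,\mu_\ve(\ball r(x_0))$, $x_0\in\ball 1\cap\{x_{n+1}=0\}$. I would derive it from the inner-variation (Pohozaev) identity for $E_\ve$ on $\ball r(x_0)$, deforming by $x\mapsto x+t\,\eta(|x-x_0|)(x-x_0)$; criticality of $\ue$ yields, for a.e.\ small $r$ and some $\Lambda=\Lambda(E_o)$,
\begin{equation*}
\frac{d}{dr}\Big(e^{\Lambda r}\,\Theta_\ve(x_0,r)\Big)\ \ge\ e^{\Lambda r}r^{1-n}\!\!\int_{\partial B_r(x_0)\cap\{x_{n+1}>0\}}\!\!\left|\pdv{\ue}{\rho}\right|^{2}\ +\ e^{\Lambda r}r^{-n}\,\xi_\ve\big(\ball r(x_0)\big).
\end{equation*}
The borderline nature of $s=\tfrac12$ enters precisely here: the Dirichlet energy of the extended layer $g$ on $\ball R$ grows like $\log R$, so a single flat interface already costs energy of order $|\log\ve|$ on $\ball 1$; accordingly the monotone quantity and the weight of the limiting varifold must be read off from the renormalized measures $\mu_\ve/|\log\ve|$ (equivalently, the energy hypothesis is effective at the scale $E_\ve(\ue)\le E_o|\log\ve|$). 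The renormalization is harmless for the monotonicity, since $|\log\ve|$ does not depend on $r$.

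\textbf{Step 3: densities, clearing-out, and the limit measure.}
From Step 2 and the energy bound the renormalized measures have locally uniform mass and density-ratio bounds; along a subsequence $\mu_\ve/|\log\ve|\rightharpoonup\mu$, which is carried by $\ball 1\cap\{x_{n+1}=0\}$ by Step 1, with $r\mapsto e^{\Lambda r}r^{1-n}\mu(\ball r(x))$ nondecreasing, so $\Theta^{n-1}(\mu,\cdot)$ exists at every point and is upper semicontinuous. A clearing-out lemma --- if $\Theta_\ve(x_0,r)/|\log\ve|\le\eta_0$ then $|\ue|\ge\tfrac12$ on $\ball{r/2}(x_0)$, proved by contradiction using the equation, the $\ve$-blow-up of Step 1, and the monotonicity --- produces a uniform lower bound $\Theta^{n-1}(\mu,x)\ge\theta_0>0$ on $\Sigma:=\spt\mu$. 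Hence $\cH^{n-1}$ is locally finite on $\Sigma$, and $\mu$ is the measure $\Theta^{n-1}(\mu,\cdot)\,\cH^{n-1}$ carried by $\Sigma$.

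\textbf{Step 4: stationarity, rectifiability, and the main obstacle.}
The decisive technical point is that the discrepancy vanishes in the limit: $\xi_\ve/|\log\ve|\rightharpoonup 0$ as Radon measures. Granting this, the inner-variation identity for $E_\ve$ tested against any $X\in C^1_c(\ball 1;\RR^{n+1})$ tangential along $\{x_{n+1}=0\}$ passes to the limit and shows that the stress-energy tensor of $\mu$ is that of the $(n-1)$-dimensional area functional; defining the varifold $V$ from $\mu$ together with the approximate tangent planes extracted from blow-ups of $\ue$, this reads $\delta V(X)=0$. Boundedness of $\delta V$ together with the lower density bound $\theta_0$ then gives $(n-1)$-rectifiability of $V$ by Allard's rectifiability theorem, while $\delta V=0$ is stationarity; this is the varifold of the statement. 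I expect the vanishing of $\xi_\ve$ to be the main obstacle: already in \cite{ht} it uses the full strength of $|\ue|\le1$, the gradient estimate and the monotonicity formula, and here it must in addition be carried through the $|\log\ve|$ renormalization and the nonlocal (extension) character of the equation along $\{x_{n+1}=0\}$.
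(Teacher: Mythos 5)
Your outline follows the right broad architecture (monotonicity on half-balls, clearing-out, density lower bounds, passing to the limit in the first variation, rectifiability from the density bound), but several of its load-bearing choices diverge from what the statement requires and from what actually works here. First, the renormalization by $|\log\ve|$ proves a different theorem: the hypothesis is a uniform bound $E_{\ve}(\ue)\le E_0$, and under that hypothesis $\mu_\ve/|\log\ve|\rightharpoonup 0$, so your limiting varifold would be trivial. The paper works with the unrenormalized measures throughout; its monotonicity formula for $r^{1-n}E_\ve(\ue,\ball r(x))$ is exact (no $e^{\Lambda r}$ correction is needed on the flat half-space), and the whole scheme --- the $\ve$-regularity threshold $\eta_0$, clearing-out, the bound $\cH^{n-1}(\Sigma)<\infty$ --- is calibrated to the $O(1)$ energy regime. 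Your observation that a full layer costs $|\log\ve|$ is a fair remark about when the hypothesis is non-vacuous, but it is not license to change the normalization inside the proof. Second, the ``discrepancy'' $\xi_\ve$ between $\tfrac1\ve W(\ue)$ and a tangential gradient energy is imported from the local Allen--Cahn setting and does not match the structure of this problem: the inner-variation identity here reads $\int(\tfrac12|\nabla u|^2\,\D X - DX\langle\nabla u,\nabla u\rangle)\,dx=-\tfrac1\ve\int W(u)\,\D_{\RR^n}X\,d\cH^n$, so the monotonicity formula carries the \emph{full} potential term with a favorable sign, and what must vanish in the limit is the entire measure $\tfrac1\ve W(\ue)\,d\cH^n$, not an equipartition defect. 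The paper proves this via an epsilon-regularity theorem (a uniform-in-$\ve$ gradient bound up to the boundary when the scaled energy is below $\eta_0$, obtained by Schoen's rescaling trick together with a $C^{1,\alpha}$ convergence lemma for the rescaled solutions), combined with $\cH^{n}(\Sigma)=0$; you flag this step as ``the main obstacle'' and leave it open, yet it is precisely the technical content of the proof.

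Two further structural points are missing at the end. The limit measure is not carried by the boundary alone: it decomposes as $\mu=\tfrac12|\nabla u_*|^2\,dx+\mu_\Sigma$ with $u_*$ the harmonic weak limit, and one must show that the bulk part and the concentrated part are \emph{separately} stationary (the paper does this using $u_*=\pm1$ on $\partial^0 U\setminus\Sigma$) before extracting the $(n-1)$-varifold from $\mu_\Sigma$. Moreover, Allard's rectifiability theorem is not directly applicable, because the stress-energy tensors $\tfrac12|\nabla\ue|^2(I_{n+1}-2\nu_\ve\otimes\nu_\ve)$ are not varifolds (the matrices are not projections onto $(n-1)$-planes), and ``approximate tangent planes extracted from blow-ups'' presupposes the rectifiability you are trying to prove. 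The paper resolves this by viewing the tensors as Ambrosio--Soner generalized varifolds and invoking their rectifiability criterion for stationary generalized varifolds with positive lower density. With the normalization reverted, the discrepancy replaced by the full potential measure, and the epsilon-regularity and generalized-varifold machinery supplied, your outline would align with the paper's argument.
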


We also remark here that a major motivation for our work came from the very interesting work of Figalli and Serra \cite{fs}, in which they proved that every bounded stable solution of the equation  (\ref{hac}) in $\RR^3$ is one dimensional. Theorem \ref{maintheorem} is the first step in the direction of applying the results of \cite{fs} to the study of minimal hypersurfaces. 

We plan to address the integrality of the limiting varifold and the counterpart of \cite{tw} in this setting in future works.

\subsection{Organization} 
  
  In section $2$, we establish two preliminary results. First, the monotonicity formula for the energy functional (\ref{hace}); to the best of our knowledge it was first given in \cite{ms}. Here we provide a proof based on the proof of \cite{ht} in the classical case. The other result we prove is a technical convergence lemma for solutions $\{ \ue \}$ under a uniform gradient bound.

  In section $3$ we study the behavior of equation (\ref{ext}) in a domain $U$, in the small energy regime. Here we first prove a clearing-out type result on the boundary. Further, we establish an epsilon regularity result for our equation. The epsilon regularity used in conjunction with the convergence lemma is a key tool for many of our proofs. 

  In section $4$, we use the results proved in the previous two sections to construct the energy concentration set $\Sigma$, and prove that it has Hausdorff dimension  $(n-1)$. Further we show that $\Sigma$ is the obstruction to lack of compactness of $\{ \ue\}$ in $H^1$. This is characterised by a defect measure $\mu_{\Sigma}$ supported in the concentration set. We also show that $\mu_{\Sigma}$ is absolutely continuous with respect to $\mathcal{H}^{n-1}\measurer \Sigma$.

  Finally in section $5$, we briefly recall the formalism of varifolds and generalized varifolds. Then we show that the stress-energy tensors associated to $\ue$ are naturally seen as generalized varifolds $V_{\ve}$ that converge as $\ve \to 0$ to a limiting generalized varifold $V$. Further, there is a stationary rectifiable $(n-1)$-varifold $V_{\Sigma}$ that is supported on $\Sigma$ that is naturally associated to $V$.

\subsection*{Notation of sets and boundaries}
\begin{itemize}
    \item  $\RR^n$ is identified with $\partial \RR^{n+1}_+ = \RR^n \times \{0\}$.
    \item  $B_r(x)$ is the open ball in  $\RR^{n+1}$ centered at $x$. If the center is not specified then it is the origin.
    \item $\ball r (x) = B_r(x) \cap \RR^{n+1}_+ $ is a half ball with center $x \in \RR^n$.
    \item $D_r(x)= B_r(x) \cap \RR^{n}$ is a disc on the boundary.
    \item For a set $U$, $\quad U^+=U \cap \RR^{n+1}_+$, $\quad \partial^+ U=\partial  U \cap \RR^{n+1}_+$.
    \item $\partial^0U = \{ x\in U \cap \RR^n \, : \ball r (x) \subset U \text{ for some } r>0 \}$. For example $\partial^0 \ball r (x) = D_r(x)$.
    \item An admissible open set $U$ is a bounded open set in $\RR^{n+1}_+$ such that $\partial U$ is Lipschitz, $\partial^0 U$ is non empty and has a Lipschitz boundary, and $\partial U= \partial^+U \cup \overline{\partial^0U}$
\end{itemize}

\subsection*{Acknowledgement} The author is immensely grateful to his advisor Professor Yannick Sire for his constant encouragement and invaluable guidance. He also wishes to thank Junfu Yao, Yifu Zhou and Jonah Duncan for several helpful discussions.

\section{Monotonicity formula and convergence lemma}
In this section and the next we will be studying the $\ve$-perturbed version of the following problem for $f=-W'$.
\begin{equation} \label{eqn:main1}
        \begin{cases}
            \Delta u = 0 \qquad &\text{in } B_R^+ \\
            \pdv{u}{\nu} = f(u) \qquad  &\text{on  } D_R
        \end{cases}
\end{equation}
A simple proof for the regularity of solutions of this equation can be found in \cite{csm}(lemma 2.3). We state it here for convenience as we will be invoking it a few times. 
\begin{theo}(from \cite{csm})
    Let $R>0$, $\alpha \in (0,1)$ and $u \in L^{\infty}(B^+_{R} \cap H^1(B^+_R)$ be a weak solution of (\ref{eqn:main1}). If $f \in C^{1,\alpha}$, then $u \in C^{2,\alpha}(\overline{B^+_{R/4}})$ and 
    \begin{equation} \label{csmest}
        \norm{u}_{C^{2,\alpha}(\overline{B^+_{R/4}})} \leq C(n,\alpha,R,\norm{f}_{C^{1,\alpha}},\norm{u}_{L^{\infty}_{B_R^+}})
    \end{equation}
\end{theo}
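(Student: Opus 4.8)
The plan is to run the standard elliptic bootstrap (this is essentially the argument of \cite{csm}). Since $u$ is harmonic in $\ball R$ it is real-analytic in the interior, so the only issue is boundary regularity near the flat part $D_R$, and the loss of domain at each stage will be absorbed along the chain $\ball R \supset \ball{3R/4} \supset \ball{R/2} \supset \ball{R/4}$. Throughout, $f$ is relevant only on the compact interval $I := [-\norm{u}_{L^\infty(\ball R)},\,\norm{u}_{L^\infty(\ball R)}]$, so every norm of $f$ below is taken over $I$ and is controlled by $\norm{f}_{C^{1,\alpha}}$ and $\norm{u}_{L^\infty(\ball R)}$.

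\emph{Step 1 (Hölder continuity up to $D_R$).} Since $u \in L^\infty$ and $f$ is continuous, $g := f(u) \in L^\infty(D_R)$. I would first show that a bounded harmonic function on $\ball R$ with bounded conormal derivative $g$ on $D_R$ lies in $C^{0,\gamma}(\overline{\ball{3R/4}})$ for every $\gamma \in (0,1)$: extending $u$ evenly across $\{x_{n+1}=0\}$ yields $\tilde u \in H^1_{\mathrm{loc}}(B_R)$ solving $\Delta \tilde u = -2g\,\mathcal{H}^{n}\measurer\{x_{n+1}=0\}$, whose right-hand side is a measure with $(n+1)$-ball density $O(r^{n})$; Riesz-potential (Morrey-space) estimates then give $\tilde u \in C^{0,\gamma}_{\mathrm{loc}}$ for all $\gamma<1$, quantitatively in terms of $\norm{u}_{L^\infty}$ and $\norm{g}_{L^\infty}$. (Alternatively, one invokes De Giorgi--Nash--Moser for the Neumann problem.) Fix $\gamma = \alpha$.

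\emph{Steps 2 and 3 (two Schauder iterations).} With $u \in C^{0,\alpha}(\overline{\ball{3R/4}})$ and $f \in C^{0,1}(I)$, the composition $g = f(u)$ lies in $C^{0,\alpha}(D_{3R/4})$, and the boundary Schauder estimate for the conormal (Neumann) problem for $\Delta$ in a half-ball gives $u \in C^{1,\alpha}(\overline{\ball{R/2}})$ with
\begin{equation*}
  \norm{u}_{C^{1,\alpha}(\overline{\ball{R/2}})} \le C\big(\norm{u}_{C^{0,\alpha}(\overline{\ball{3R/4}})} + \norm{g}_{C^{0,\alpha}(D_{3R/4})}\big).
\end{equation*}
Now that $u \in C^{1,\alpha}$, on $D_{R/2}$ the tangential gradient $\nabla_{x'}g = f'(u)\,\nabla_{x'}u$ is the product of the $C^{0,\alpha}$ function $f'(u)$ (since $f'\in C^{0,\alpha}(I)$ and $u \in C^{1,\alpha}$) with the $C^{0,\alpha}$ function $\nabla_{x'}u$, hence $g \in C^{1,\alpha}(D_{R/2})$; a second application of the estimate yields $u \in C^{2,\alpha}(\overline{\ball{R/4}})$, controlled by $\norm{u}_{C^{1,\alpha}(\overline{\ball{R/2}})}$ and $\norm{g}_{C^{1,\alpha}(D_{R/2})}$. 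No corner needs separate treatment, since $D_{R/4}$ lies strictly inside $D_R$ where the Neumann condition holds, and $\partial B_{R/4}\cap\{x_{n+1}>0\}$ is interior.

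Chaining the three quantitative bounds, and estimating the $C^{0,\alpha}$- and $C^{1,\alpha}$-norms of $f(u)$ in terms of $\norm{f}_{C^{1,\alpha}(I)}$ and the already-controlled norms of $u$, produces (\ref{csmest}) with $C = C(n,\alpha,R,\norm{f}_{C^{1,\alpha}},\norm{u}_{L^\infty(\ball R)})$; the $H^1$ hypothesis is used only to make sense of ``weak solution'' and to run Step 1, and it drops out of the final constant. I expect the main obstacle to be Step 1 together with the precise form of the conormal Schauder estimate: one must either quote these from the literature (oblique-derivative Schauder estimates of Agmon--Douglis--Nirenberg / Gilbarg--Trudinger type) or derive them from the even reflection above, taking care that the concentrated right-hand side is handled correctly and that the constants track the asserted dependence.
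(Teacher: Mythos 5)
The paper does not prove this statement at all: it is quoted verbatim from Lemma 2.3 of \cite{csm}, and your bootstrap (boundary H\"older continuity via even reflection across $\{x_{n+1}=0\}$, followed by two conormal Schauder iterations using $f\in C^{1,\alpha}$ to upgrade $f(u)$ from $L^\infty$ to $C^{0,\alpha}$ to $C^{1,\alpha}$) is essentially the argument given in that reference. Your proposal is correct.
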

\begin{coro}
    When $f(u)= \frac{1}{\ve}(u-u^3)$, then $u \in C^{\infty}(\ball {R/4} \cup D_{R/4})$.
\end{coro}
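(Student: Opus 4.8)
This would be a bootstrap argument based on the theorem just quoted. First I would observe that $f(t)=\tfrac{1}{\ve}(t-t^3)$ is a polynomial, hence lies in $C^{1,\alpha}(\RR)$ for every $\alpha\in(0,1)$; so for $u\in L^\infty(\ball{R})\cap H^1(\ball{R})$ the theorem already gives $u\in C^{2,\alpha}(\overline{\ball{R/4}})$, and in particular the trace $u|_{D_{R/4}}$ is $C^{2,\alpha}$. Since $f$ is in fact $C^\infty$, the idea is that any extra regularity of $u$ up to $D_{R/4}$ can be fed back through the Neumann condition, so the plan is to iterate a higher-order boundary Schauder estimate for the flat Neumann problem: if $v$ is harmonic in a half ball and $\partial_\nu v=g$ on the flat face with $g\in C^{k,\alpha}$, then $v\in C^{k+1,\alpha}$ on the concentric half ball of half the radius (the case $k=1$ is the quoted theorem, and larger $k$ has the same proof).

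The induction step I have in mind is: if $u\in C^{k,\alpha}(\overline{\ball{\rho}(x_0)})$ for some $x_0\in D_{R/4}$ and some $\ball{\rho}(x_0)\subset\ball{R/4}$ (base case $k=2$, from the theorem), then since $u$ is bounded and $f$ smooth the chain rule gives $f(u)\in C^{k,\alpha}(\overline{D_\rho(x_0)})$, and applying the boundary Schauder estimate to $u$ --- harmonic in $\ball{\rho}(x_0)$ with Neumann data $f(u)$ on $D_\rho(x_0)$ --- would give $u\in C^{k+1,\alpha}(\overline{\ball{\rho/2}(x_0)})$. Thus the regularity index gains one per step at the cost of halving the radius.

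To finish, I would check that $u$ is smooth in a relatively open neighborhood of each point of $\ball{R/4}\cup D_{R/4}$. At an interior point this is just interior regularity of harmonic functions. At a boundary point $x_0\in D_{R/4}$, fixing $k\in\NN$ and running the step above $k$ times from a small half ball $\ball{\rho}(x_0)\subset\ball{R/4}$ produces $u\in C^{k+1,\alpha}(\overline{\ball{\rho/2^k}(x_0)})$, which is a genuine neighborhood of $x_0$ because $\rho/2^k>0$; since $k$ is arbitrary, $u$ is $C^\infty$ near $x_0$.

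The one place that needs care --- everything else being bookkeeping --- is the higher-order boundary Schauder estimate for the flat Neumann problem. It is classical, and I would prove it either by differentiating the equation in the tangential directions $x_1,\dots,x_n$ --- which keeps harmonicity and turns the boundary condition into the linear oblique-derivative condition $\partial_\nu(\partial_{x_i}u)=f'(u)\,\partial_{x_i}u$ on the flat face, with Hölder coefficient $f'(u)$ --- and then recovering the normal second derivative via $\partial^2_{x_{n+1}}u=-\sum_{i=1}^n\partial^2_{x_i}u$; or, equivalently, by passing to the trace $\phi=u|_{\RR^n}$, which solves $(-\Delta)^{1/2}\phi=f(\phi)$, bootstrapping there with Schauder estimates for $(-\Delta)^{1/2}$, and then noting that $u$ is the harmonic extension of the now-smooth datum $\phi$ on $D_{R/4}$ and hence smooth up to $D_{R/4}$ by Dirichlet boundary regularity.
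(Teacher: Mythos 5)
Your bootstrap is correct and is exactly the standard argument the paper leaves implicit (the corollary is stated without proof, as an immediate consequence of the quoted $C^{2,\alpha}$ estimate from \cite{csm}). The only ingredient beyond the quoted theorem is the higher-order Schauder estimate for the flat Neumann problem, which you rightly identify as the crux and for which your tangential-differentiation sketch (harmonicity is preserved, the boundary condition becomes $\partial_\nu(\partial_{x_i}u)=f'(u)\,\partial_{x_i}u$ with Hölder data, and $\partial^2_{x_{n+1}}u$ is recovered from the equation) is the standard proof.
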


\subsection{Monotonicity formula}We now prove a monotonicity formula for the associated energy and a convergence lemma that will be required in the proof of the epsilon regularity result. A monotonicity formula in this setting was proved in \cite{ms}. Here we give a proof following the allen-cahn case \cite{ht}.

 For any $x \in D_R$ and for $0<r\leq R-|x|$, we denote the scaled energy by 
 $$
    I_{\ve}(r,x) =  \frac{1}{r^{n-1}}E_{\ve}(u, \ball {r}(x))
 $$
\begin{theo}
Let $u_\ve \in C^2(\ball R \cup D_R)$, satisfying $|u_\ve|\leq 1$, with $\ve <R$ be a solution of 

    \begin{equation} \label{eqn:main}
        \begin{cases}
            \Delta \ue = 0 \qquad &\text{in } B_R^+ \\
            \pdv{\ue}{\nu} = -\frac{1}{\ve}W'(u_\ve) \qquad  &\text{on  } D_R
        \end{cases}
    \end{equation}
Then for any $x \in D_R$, and for $0< r_1 \leq r_2 \leq R-|x|$, the scaled energy $I_\ve(r,x)$ is monotonically increasing in $r$ and satisfies the following relation
\begin{multline} \label{eqn:mf1}
   I_{\ve}(r_2,x) - I_{\ve}(r_1,x)
    = \int_{r_1}^{r_2} \frac{1}{r^{n-1}}\bigg(\int_{\partial ^+\ball r(x)} |(y-x)\cdot \nabla u|^2 \mh_y \bigg) \,dr \\
    + \int_{r_1}^{r_2} \frac{1}{r^{n}} \bigg( \int_{D_r(x)} \frac{W(u)}{\ve} \mh \bigg) \,dr
    \geq 0
\end{multline}
\end{theo}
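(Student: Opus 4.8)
The plan is to derive (\ref{eqn:mf1}) by combining the elementary scaling derivative of $I_\ve$ with a Pohozaev-type (inner variation) identity for the harmonic function $u$, following the interior Allen--Cahn argument of \cite{ht}; the one new ingredient is an extra integration by parts on the flat face $D_r$ forced by the Neumann datum. After translating I may assume $x=0$. Writing $E_\ve(r):=E_\ve(u,\ball r)$, the coarea formula gives
\begin{equation*}
  E_\ve'(r)=\frac12\int_{\partial^+\ball r}|\nabla u|^2\mh+\frac1\ve\int_{\partial D_r}W(u)\,d\mathcal H^{n-1},
\end{equation*}
and since $I_\ve(r)=r^{1-n}E_\ve(r)$ we have $r^{n}I_\ve'(r)=(1-n)E_\ve(r)+r\,E_\ve'(r)$. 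So it suffices to show this last expression equals $\tfrac1r\int_{\partial^+\ball r}|y\cdot\nabla u|^2\mh+\tfrac1\ve\int_{D_r}W(u)\mh\ge 0$; integrating in $r$ over $[r_1,r_2]$ then yields (\ref{eqn:mf1}) (on $\partial^+\ball r(x)$ one has $|y-x|=r$, so there the first integrand equals $r^{1-n}$ times the squared radial derivative of $u$).

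For the Pohozaev identity I would test $\Delta u=0$ against $y\cdot\nabla u$ over $\ball r$. One integration by parts moves the Laplacian and generates $\int_{\partial\ball r}(y\cdot\nabla u)\,\pdv{u}{\nu}\mh$; using $\nabla(y\cdot\nabla u)\cdot\nabla u=|\nabla u|^2+\tfrac12\,y\cdot\nabla(|\nabla u|^2)$ and integrating the divergence term once more gives
\begin{equation*}
  0=\int_{\partial\ball r}(y\cdot\nabla u)\,\pdv{u}{\nu}\mh-\frac12\int_{\partial\ball r}(y\cdot\nu)\,|\nabla u|^2\mh+\frac{n-1}2\int_{\ball r}|\nabla u|^2\,dy.
\end{equation*}
On $\partial^+\ball r$ the outer normal is $\nu=y/r$, so $y\cdot\nu=r$ and $(y\cdot\nabla u)\,\pdv{u}{\nu}=r^{-1}|y\cdot\nabla u|^2$. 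On $D_r$ we have $\nu=-e_{n+1}$, so $y\cdot\nu=0$ while $y\cdot\nabla u=y'\cdot\nabla' u$ is purely tangential (here $y'$ denotes the first $n$ coordinates of $y$); applying $\pdv{u}{\nu}=-\tfrac1\ve W'(u)$ and then integrating by parts on the $n$-disc $D_r$ (whose boundary sphere has outer normal $y'/r$ with $|y'|=r$) converts the $D_r$-part of the first integral into $\tfrac n\ve\int_{D_r}W(u)\mh-\tfrac r\ve\int_{\partial D_r}W(u)\,d\mathcal H^{n-1}$.

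Putting these pieces into the displayed identity and solving for $\tfrac r2\int_{\partial^+\ball r}|\nabla u|^2\mh$ yields
\begin{multline*}
  \frac r2\int_{\partial^+\ball r}|\nabla u|^2\mh
  =\frac1r\int_{\partial^+\ball r}|y\cdot\nabla u|^2\mh+\frac{n-1}2\int_{\ball r}|\nabla u|^2\,dy\\
  +\frac n\ve\int_{D_r}W(u)\mh-\frac r\ve\int_{\partial D_r}W(u)\,d\mathcal H^{n-1}.
\end{multline*}
Substituting this into $r^{n}I_\ve'(r)=(1-n)E_\ve(r)+r\,E_\ve'(r)$, the two $\int_{\ball r}|\nabla u|^2$ terms cancel (coefficients $\tfrac{n-1}2$ and $\tfrac{1-n}2$), the two $\tfrac r\ve\int_{\partial D_r}W(u)$ terms cancel, and the coefficient of $\tfrac1\ve\int_{D_r}W(u)$ adds up to $(1-n)+n=1$, leaving exactly $r^{n}I_\ve'(r)=\tfrac1r\int_{\partial^+\ball r}|y\cdot\nabla u|^2\mh+\tfrac1\ve\int_{D_r}W(u)\mh\ge 0$. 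Integrating over $[r_1,r_2]$ gives (\ref{eqn:mf1}).

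The integrations by parts are harmless, since $u\in C^2(\overline{\ball r})$ for every $r<R$, and the endpoint case $r=R$ (relevant only when $x=0$) is recovered by letting $r\uparrow R$ and using monotone convergence. The one point that takes care is the final bookkeeping: after the two integrations by parts on $\ball r$ and the one on $D_r$, one must check that every contribution other than the two manifestly nonnegative ones cancels — in particular that the $\int_{\partial D_r}W(u)$ boundary terms coming from $r\,E_\ve'(r)$ and from the Pohozaev identity cancel with the correct sign. I expect this to be the main (and essentially only) real obstacle.
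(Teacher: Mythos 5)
Your proof is correct and follows essentially the same route as the paper: both rest on the Pohozaev/inner-variation identity for the radial vector field, the paper obtaining it by testing against a mollification of $X(y)=y\,\chi_{\overline{\ball r}}$ and passing to the limit, while you compute directly on the sharp half-ball via the coarea formula plus an explicit tangential integration by parts on $D_r$, which makes the cancellation of the $\partial D_r$ boundary terms transparent; the intermediate identity you reach is exactly the one the paper gets after letting $\rho\to\chi_{\overline{\ball r}}$. Your parenthetical is also on target: the computation yields $r^{n}I_\ve'(r)=\tfrac1r\int_{\partial^+\ball r}|(y-x)\cdot\nabla u|^2\mh+\tfrac1\ve\int_{D_r}W(u)\mh$, i.e.\ the first term of (\ref{eqn:mf1}) should carry $r^{-(n+1)}$ against $|(y-x)\cdot\nabla u|^2$ (equivalently $r^{1-n}$ times the squared radial derivative), so the exponent $n-1$ in the stated formula and in (\ref{eqn:mf}) is a typo that affects neither the nonnegativity nor any later use of the monotonicity.
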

\begin{proof}
    
Given any vector field $X \in C_c^1(\ball R \cup D_R)$ such that $X^{n+1}|_{D_R}=0$, we claim that 
\begin{equation} \label{eqn:fv}
    \int_{\ball R} \bigg( \frac{|\nabla u|^2}{2} \D X - DX \langle \nabla u, \nabla u \rangle \bigg)\,dx =- \frac{1}{\ve}\int_{D_R} W(u) \D_{\RR^n} X \mh 
\end{equation}

we multiply the equation (\ref{eqn:main}) by $\nabla u \cdot X$ and integrate by parts. 

\begin{align}
    0 &= \int_{\ball R} \Delta u \nabla u 
    \cdot X \,dx \\
    &= -\int_{\ball R} \nabla[\nabla u \cdot X] \cdot \nabla u \,dx + \int_{D_R} (\nabla u \cdot X)\frac{\partial u}{\partial \nu} \mh \label{ibp}  \\
    &= -\int_{\ball R} \nabla[\nabla u \cdot X] \cdot \nabla u \,dx + \int_{D_R} (\nabla u \cdot X)\frac{-W'(u)}{\ve} \mh \\
    &= \Bigg[-\int_{\ball R} \nabla\frac{|\nabla u|^2}{2} \cdot X \,dx -\int_{\ball R} DX \langle \nabla u, \nabla u \rangle\,dx \Bigg] + \Bigg[\int_{D_R} -\frac{\nabla W(u)}{\ve} \cdot X \mh \Bigg] \\
    &= \int_{\ball R} \bigg( \frac{|\nabla u|^2}{2} \D X - DX \langle \nabla u, \nabla u \rangle\bigg)\,dx + \frac{1}{\ve}\int_{D_R} W(u) \D_{\RR^n} X \mh  
\end{align}

Let $\rho$ be a smooth mollification of $\chi_{\overline{\ball r}}$, the characteristic function of the half ball. Then we take $X(y) = y\rho(|y|)=(y_1\rho(r), \cdots,y_{n+1}\rho(r))$. Then we have 
$$ X^i_j = \delta^i_j \rho + \frac{y_i y_j}{r}\rho '(r)$$ Plugging this into the equation we get 

\begin{equation}
    \int_{\ball R} \bigg( \frac{|\nabla u|^2}{2}[(n-1)\rho+r\rho ')] - \frac{\rho '}{r}(y\cdot \nabla u)^2\bigg)\,dx + \frac{1}{\ve}\int_{D_R} W(u)(n\rho + r \rho ') \mh = 0
\end{equation}

As $\rho \to \chi_{\overline{\ball r}}$, and then dividing by $r^{n-1}$ we get
\begin{equation} \label{eqn:mf}
    \frac{d}{dr} \frac{1}{r^{n-1}} E_{\ve}(u, \ball r) = \frac{1}{r^{n-1}} \int_{\partial ^+ \ball r} (y\cdot \nabla u)^2 \mh + \frac{1}{t^n} \int_{D_r}\frac{W(u)}{\ve} \mh 
\end{equation}

Integrating from $r_1$ to $r_2$ we get (\ref{eqn:mf1})
\end{proof}

The following corollary is an easy consequence of the monotonicity formula (\ref{eqn:mf1}) that will be used frequently.

\begin{coro}
If $I_{\ve}(R,0) \leq \eta$, then for any $x \in D_{R/2}$ and for $0\leq r \leq R-|x|$,  $I_\ve (r,x) \leq 2^{n-1} \eta$
\end{coro}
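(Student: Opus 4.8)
The plan is to combine two facts, both already available: the monotonicity of $I_\ve(\cdot,x)$ in the radius, which is exactly the statement of the preceding theorem, and the obvious monotonicity of $E_\ve(u,\cdot)$ under inclusion of admissible domains, which holds because the integrand defining $E_\ve$ is pointwise nonnegative (so enlarging the half-ball, and correspondingly its flat part $\partial^0$, only increases the energy).

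First I would record the relevant half-ball inclusion. For $x\in D_{R/2}$ and any $y$ with $|y-x|<R-|x|$ one has $|y|\le |y-x|+|x|<R$, hence $\ball{R-|x|}(x)\subset \ball R(0)$ and $D_{R-|x|}(x)\subset D_R(0)$; consequently $E_\ve\big(u,\ball{R-|x|}(x)\big)\le E_\ve\big(u,\ball R(0)\big)$. Next, by the monotonicity formula (\ref{eqn:mf1}), for $0\le r\le R-|x|$ we have $I_\ve(r,x)\le I_\ve(R-|x|,x)$, and unwinding the definition together with the inclusion above gives
\[
I_\ve(R-|x|,x)=\frac{E_\ve\big(u,\ball{R-|x|}(x)\big)}{(R-|x|)^{n-1}}\le \frac{E_\ve\big(u,\ball R(0)\big)}{(R-|x|)^{n-1}}=\left(\frac{R}{R-|x|}\right)^{n-1} I_\ve(R,0).
\]
Since $x\in D_{R/2}$ forces $R-|x|\ge R/2$, the prefactor $\big(R/(R-|x|)\big)^{n-1}$ is at most $2^{n-1}$, and with the hypothesis $I_\ve(R,0)\le\eta$ we conclude $I_\ve(r,x)\le 2^{n-1}\eta$ for all such $r$, as claimed.

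There is essentially no obstacle in this argument; the only two points that deserve an explicit line are the half-ball inclusion — which is valid precisely because $x$ lies on the boundary disc $D_{R/2}$, so every half-ball centered at $x$ of radius up to $R-|x|$ sits inside $\ball R(0)$ — and the monotonicity of $E_\ve$ under set inclusion, which is immediate from nonnegativity of both the Dirichlet density and the potential term $W\ge 0$.
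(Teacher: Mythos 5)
Your argument is correct and is essentially identical to the paper's proof: both use the monotonicity $I_\ve(r,x)\le I_\ve(R-|x|,x)$ from the preceding theorem, the inclusion $\ball{R-|x|}(x)\subset \ball R(0)$ together with nonnegativity of the energy density, and the bound $R-|x|>R/2$ to obtain the factor $2^{n-1}$. No gaps.
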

\begin{proof}
  We have
\begin{equation}\label{eqn:mflemma}
    \begin{split}
                   I_\ve (r,x) &\leq I_\ve (R-|x|,x) \\
                    &\leq \bigg(\frac{1}{R-|x|}\bigg)^{n-1} E_{\ve}(u,\ball R) \\
                    &\leq \bigg(\frac{R}{R-|x|}\bigg)^{n-1} I_\ve (R,0) \leq 2^{n-1} \eta \quad \because |x|<R/2
    \end{split}
\end{equation}  
\end{proof}

\subsection{Convergence lemma}We now prove a convergence lemma that is central to the proof of our epsilon regularity result. It also provides context for the discussion in section 4.

\begin{lemm}\label{lem:conv}
    Let $\{\ui\}_{i \in \NN } \in C^2(B_1\cup D_1)$ be solutions of 
    \begin{equation} \label{eqn:conveqn}
        \begin{cases}
            \Delta \ui = 0 \qquad &\text{in } B_1^+ \\
            \pdv{\ui}{\nu} = \frac{1}{\ve_i}(\ui-\ui^3) \qquad  &\text{on  } D_1
        \end{cases}
    \end{equation}
satisfying
    \begin{equation} \label{eqn:bdry}
       |\ui| \leq 1, |\nabla \ui| \leq 2 \quad \text{in } B_1^+ \cup D_1 \quad \text{and }|\ui|\geq 1/2 \quad \text{on } D_1 
    \end{equation}
Then there is a function $u_{*}$ such that as $\ve_i \to 0$, $\ui \to u_{*}$ in $C^{1,\alpha}_{loc}(\ball 1 \cup D_1)$, and
\begin{equation}
        \begin{cases}
            \Delta u_{*} = 0 \qquad &\text{in } B_1^+ \\
            u_{*} = \pm 1 \qquad  &\text{on  } D_1
        \end{cases}
    \end{equation}
\end{lemm}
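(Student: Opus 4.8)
The plan is to exploit the uniform $C^{1}$ bound together with interior elliptic estimates to extract a limit, and then to show the limit has the right boundary values by using the bound $|\ui| \geq 1/2$ on $D_1$ to rule out the transition region on the boundary. First I would upgrade the hypotheses to higher-order compactness: since $|\ui| \le 1$ and $|\nabla \ui| \le 2$, the right-hand side $\frac1{\ve_i}(\ui - \ui^3)$ is bounded only by $2/\ve_i$, which blows up, so one cannot directly apply the $C^{2,\alpha}$ estimate \eqref{csmest} with a fixed constant. Instead I would work with the harmonic extension structure: $\ui$ is harmonic in $B_1^+$, so interior (away from $D_1$) estimates give $C^\infty_{loc}$ convergence in $B_1$ on a subsequence to some harmonic $u_*$. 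The content is getting convergence up to $D_1$ and identifying the trace.

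The key step is the following dichotomy on $D_1$. Fix $x_0 \in D_1$ and look at $\ui(x_0) \in [1/2, 1] \cup [-1, -1/2]$; along a subsequence the sign is constant, say $\ui(x_0) \ge 1/2$. Because $|\nabla \ui| \le 2$ uniformly, $\ui \ge 1/2 - 2\rho \ge 1/4$ on $D_\rho(x_0)$ for $\rho$ small and uniform in $i$. On such a disc the Neumann data $\frac1{\ve_i}(\ui - \ui^3) = \frac1{\ve_i}\ui(1-\ui)(1+\ui)$ has a sign (it is $\ge 0$ when $\ui \le 1$, with equality only where $\ui = 1$). The plan is to test the equation against $(1 - \ui)$ (or a cutoff version $\zeta^2(1-\ui)$ with $\zeta$ supported in $B_\rho(x_0)$): integrating by parts,
\begin{equation*}
    \int_{B_1^+} \zeta^2 |\nabla \ui|^2 \,dx + (\text{cutoff terms}) + \frac{1}{\ve_i}\int_{D_1} \zeta^2 (1-\ui)\ui(1-\ui)(1+\ui)\mh = 0 .
\end{equation*}
Both the Dirichlet term and the boundary term are nonnegative up to the controlled cutoff error, which is bounded by $E_o$-type quantities via the energy bound implicit in the setup (or directly by the $C^1$ bound on a fixed ball). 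Hence $\frac{1}{\ve_i}\int_{D_\rho(x_0)} (1-\ui)^2 (1+\ui)\ui \mh \to 0$, and since $\ui(1+\ui) \ge 1/4$ there, $\int_{D_\rho(x_0)}(1-\ui)^2 \mh \to 0$; combined with $C^{1,\alpha}$ bounds (from Schauder for the Neumann problem applied on the \emph{subdomain} $D_\rho(x_0)$ where the nonlinearity is now controlled, since $1 - \ui$ is small in $L^2$ forces it small in $L^\infty$ by the gradient bound — or more simply, $(1-\ui)$ is Lipschitz with constant $2$ and small in $L^2$, hence small in $C^0$ on a slightly smaller disc) we get $\ui \to 1$ uniformly on $D_{\rho/2}(x_0)$. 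The signs being locally constant and $D_1$ connected, $u_* \equiv +1$ or $\equiv -1$ on $D_1$.

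With $u_* = \pm 1$ pinned on $D_1$ and $\ui \to u_*$ uniformly near $D_1$, I would then bootstrap: on a fixed smaller half-ball the functions $\ui$ are harmonic with boundary data converging in $C^0$, and the $C^{1,\alpha}$ bound transfers to the limit; standard arguments (e.g. barrier/maximum principle comparison, or elliptic estimates for the difference $\ui - u_*$ with its now-small boundary trace and still-bounded Neumann flux localized away from where it matters) upgrade this to $C^{1,\alpha}_{loc}(\ball 1 \cup D_1)$ convergence, and $\Delta u_* = 0$ in $B_1^+$ passes to the limit immediately. The main obstacle is the one flagged above: the Neumann data is not uniformly bounded, so the standard Schauder estimate \eqref{csmest} is unavailable globally, and the whole argument must be localized to the region $|\ui| \ge 1/4$ where the sign and size of the nonlinearity can be leveraged — the testing-against-$(1-\ui)$ trick is what converts the a priori useless $O(1/\ve_i)$ flux into a useful smallness statement. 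A secondary technical point is justifying that $L^2$-smallness of $1-\ui$ plus the Lipschitz bound yields $C^0$-smallness uniformly, which is elementary but needs the uniform gradient bound \eqref{eqn:bdry} in an essential way.
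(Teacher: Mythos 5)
Your interior analysis and your identification of the boundary trace are fine (in fact the trace identification is easier than your Caccioppoli argument: the boundary condition gives $|u_i-u_i^3|=\ve_i\,|\partial u_i/\partial\nu|\le 2\ve_i$ pointwise on $D_1$, so the uniform limit must be a zero of $t-t^3$, and $|u_i|\ge 1/2$ excludes $0$; the same observation yields the quantitative bound $0\le 1-u_i\le \tfrac{8}{3}\ve_i$ on $D_1$ directly, with no testing needed). The genuine gap is in the final step, the upgrade to $C^{1,\alpha}_{loc}$ convergence up to $D_1$, which is precisely where all the work in the actual proof lies. Note first that the Neumann data is \emph{not} merely bounded by $2/\ve_i$: it equals $\partial u_i/\partial\nu$, hence is bounded by $2$ in $L^\infty(D_1)$ by hypothesis. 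But an $L^\infty$ bound on Neumann data does not give uniform $C^{1,\alpha}$ estimates up to the boundary; Schauder theory requires a uniform $C^{0,\alpha}$ bound on the data $\frac{1}{\ve_i}(u_i-u_i^3)$, which amounts to the quantitative H\"older seminorm estimate $[u_i]_{C^{0,\alpha}(D_{1/4})}\le C_\alpha\,\ve_i$. Your argument produces only $\|1-u_i\|_{C^0}\to 0$ (indeed $O(\ve_i)$ from the pointwise bound above), and the Lipschitz bound $|\nabla u_i|\le 2$ gives only $[u_i]_{C^{0,\alpha}}\le C$, so the H\"older norm of the Neumann data is a priori of size $C/\ve_i$. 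Nothing in your sketch --- neither the $L^2$-smallness from testing against $\zeta^2(1-u_i)$, nor ``barrier/maximum principle comparison for the difference $u_i-u_*$'' --- closes this order-$\ve_i$ gap; convergence of $u_i$ to its trace in $C^0$ plus harmonicity does not control the normal derivatives up to $D_1$, and convergence of normal derivatives is part of what $C^{1,\alpha}$ convergence up to the boundary asserts.

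The missing ingredient is a maximum-principle argument applied not to $u_i$ but to the translated differences $g_{i,y}=u_i(\cdot+y)-u_i$, which satisfy a Robin-type boundary condition $\frac{\ve_i}{u_i(1+u_i)}\partial_\nu g_{i,y}+g_{i,y}=\ve_i f_{i,y}$ on $D_{1/2}$ with $\|f_{i,y}\|_{L^\infty}\le C_\alpha|y|^\alpha$. Comparing $\pm g_{i,y}$ against an auxiliary barrier $v_i$ (the solution of a mixed Dirichlet--Robin problem, which satisfies $|v_i|\le C\ve_i$ on $D_{1/4}$) via the Hopf boundary point lemma yields $|g_{i,y}|\le C_\alpha\ve_i|y|^\alpha$ on $D_{1/4}$, i.e.\ exactly the $C^{0,\alpha}$ bound on $\frac1{\ve_i}(u_i-u_i^3)$ needed to invoke the Schauder estimate \eqref{csmest} uniformly in $i$. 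Without some version of this quantitative oscillation estimate at scale $\ve_i$, your proof does not reach the stated $C^{1,\alpha}_{loc}(\ball{1}\cup D_1)$ conclusion.
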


\begin{proof}
Note that due to the uniform bound on gradient and $\ui$, there is a weak limit $u_*$ of $\ui$ in $H^1(\ball 1)$. Since, $\ui$ are harmonic, so is $u_*$, and thus $\ui \to u$ in $C^{\infty}_{loc}(\ball 1)$. Further, note that using the uniform $C^0$ and $C^1$ estimates (\ref{eqn:bdry}), interpolation of Holder norms gives 
\begin{equation}\label{alphaconv}
    u_i \to u_* \quad \text{in $C^{0,\alpha}_{loc}(\ball 1 \cup D_1)$ for every $0<\alpha<1$} 
\end{equation} 
Combining the gradient bound with the boundary condition of equation (\ref{eqn:conveqn}), we get either $u_*=0$ or $|u_*|=  1$ on $D_1$, but since $|u_i| \geq 1/2$ on $D_1$ so $u_* \neq 0$. Further we just saw that $u_*$ is continuous up to the boundary, so $u_*=1$ or $u_*=-1$. Without loss of generality we will assume that $u_*=1$ on $D_1$. Now it just remains to upgrade the convergence from $C^{0,\alpha}_{loc}$ to $C^{1,\alpha}_{loc}$ for points on $D_1$. If we show that 
$$
\norm{\frac{1}{\ve_i}(\ui-\ui^3)}_{C^{0,\alpha}(D_{1/4})} \leq C_{\alpha}
$$
then by (\ref{csmest}), $\norm{u_k}_{C^{1,\alpha}({\ball {1/8} \cup D_{1/8}})} \leq C_{\alpha}$ and we will have the desired convergence. To prove the above, due to (\ref{eqn:bdry}) and $u_*=1$ we have $u_i\geq 1/2$ for $i$ large enough, therefore it is enough to show that
\begin{equation} \label{eqn:convlemmaest}
\norm{\frac{1}{\ve_i}(1-\ui)}_{C^{0,\alpha}(D_{1/4})} \leq C_{\alpha}
\end{equation}
That is, we need to show that
\begin{equation} \label{goal}
     |1-\ui(x)| + \frac{|u_i(x+y)-u_i(x)|}{|y|^\alpha} \leq C_\alpha \ve_i \quad \text{for every } x,y \in D_{1/4}
\end{equation}
In the rest of the proof we establish this. First note that the estimates in (\ref{eqn:bdry}) combined with the boundary condition (\ref{eqn:conveqn}) gives us the first term in (\ref{goal}) 
\begin{equation}\label{eqn:hest}
    0 \leq 1-\ui(x) \leq \frac{8}{3}\ve_i \quad \text{for all } x \in D_1
\end{equation}

Now fix a $y \in \{D_{1/4} \backslash \{0\} \}$, then for $x\in \overline{\ball {1/2}}$ we write $u_{i,y}(x)=u_i(x+y)$ and $g_{i,y}(x) = u_{i,y}(x)-u_i(x)$. To get the desired second term in (\ref{goal}) we need to show that  $$|g_{i,y}| \leq \tilde{C}_\alpha  |y|^{\alpha}\ve_i \quad \text{on } D_{1/4} $$
For this, we will construct a function $v_i$ such that on 
$$
    |v_i|<C\ve_i \text{ and } v_i \pm g_{i,y}/|y|^\alpha \geq 0 \quad \text{on }D_{1/4}
$$ 
First note that $g_{i,y}$ solves,
\begin{equation} \label{geqn}
    \begin{cases}
        \Delta g_{i,y} = 0 \qquad \text{in } \ball {1/2} \\
        \frac{\ve_i}{u_i(u_i+1)} \pdv{g_{i,y}}{\nu}+ g_{i,y} = \ve_i  f_{i,y} \quad \text{on } D_{1/2}
    \end{cases}
\end{equation}
Here 
\begin{align*}
    f_{i,y} &= \bigg [ \frac{1}{u_{i,y} (1+u_{i,y})}  -\frac{1}{u_i(1+u_i)} \bigg] \pdv{u_{i,y}}{\nu} \\
    &= (u_i-u_{i,y})\bigg  [ \frac{1+\ui+u_{i,y}}{\ui u_{i,y}(1+\ui)(1+u_{i,y})} \bigg] \pdv{u_{i,y}}{\nu}
\end{align*}  
Using (\ref{eqn:bdry}), we estimate $f_{i,y}$ on $D_{1/2}$. 
\begin{align*}
    |f_{i,y}| &\leq |u_{i,y}-u_i| \cdot \frac{16}{3} \bigg|\pdv{u_{i,y}}{\nu} \bigg| \quad \because 1/2 \leq u_i \leq 1 \text{ on } D_1  \\
    &\leq |u_{i,y}-u_i|  \frac{32}{3} \qquad \because |\nabla u_i| \leq 2 \\
    &\leq C_\alpha |y|^{\alpha} \qquad \text{by } (\ref{alphaconv})
\end{align*}
Therefore,
\begin{equation}\label{eqn:gfest}
    \norm{f_{i,y}}_{L^{\infty}(D_{1/2})} < C_{\alpha}|y|^\alpha  
\end{equation}
Further, again by (\ref{eqn:bdry}), we have the following,
\begin{align} \label{supest}
    \norm{g_{i,y}}_{L^{\infty}(\overline{\ball {1/2}})} =C_y &\leq 2|y| \quad \because |\nabla u_i| \leq 2  \\
    &\leq C'_{\alpha}|y|^{\alpha} \text{ for some }C'_\alpha \because |y|<1/4 \label{cprimeeqn}
\end{align}
Now, we construct the function $\vi$ mentioned above. Consider the solution of the following mixed boundary value problem on $\ball {1/2}$
\begin{equation}\label{veqn}
    \begin{cases}
        \Delta \vi =0 \quad \text{in } \ball {1/2} \\
        \vi = 1  \quad \text{on } \partial^{+}\ball {1/2} \\
        \frac{4\ve_i}{3} \pdv{\vi}{\nu} + \vi = 0 \quad \text{in } D_{1/2} 
    \end{cases}   
\end{equation}
Clearly $\vi \leq 1$. However, $\vi \geq 0$ as well because if not then the point of negative minimum $x \in D_{1/2}$ but then Hopf boundary point lemma at $x$ gives $\ve_i \pdv{\vi}{\nu} + \vi <0$, which contradicts the boundary condition on $D_{1/2}$. So $|\vi|\leq 1$ and by lemma 6.26 in \cite{gt}, we have $$|\nabla \vi| \leq C \quad \text{in } \overline{\ball {1/4}}  $$
Here $C$ is a dimensional constant. Combining this with the boundary condition on $D_{1/2}$ for $\vi$ we get 
\begin{equation}\label{veboundary}
    |\vi| \leq C\ve_i \quad \text{on } D_{1/4}
\end{equation}
Now we have all the required estimates. Fix $\alpha \in (0,1)$, and recall that $y\in D_{1/4} \backslash \{0\}$ is fixed. Recall the definitions of $C_\alpha$, $C'_\alpha$ and $C_y$ from (\ref{eqn:gfest}) and (\ref{supest}). Now, consider the functions $$w_{i,\alpha,y}^{\pm} = C_y \vi   \pm g_{i,y} + \ve_i C_\alpha|y|^{\alpha}$$ 
We claim that $w_{i,\alpha,y}^{\pm}$ satisfy 
\begin{equation} \label{weqn}
    \begin{cases}
        \Delta w_{i,\alpha,y}^{\pm}=0\quad  \text{in } \ball {1/2} \\
        w_{i,\alpha,y}^{\pm} \geq \ve_i C_\alpha|y|^{\alpha} > 0 \quad \text{on } \partial^{+}\ball {1/2} \\
        \frac{\ve_i}{u_i(1+u_i)} \pdv{w_{i,\alpha,y}^{\pm}}{\nu} + w_{i,\alpha,y}^{\pm}  \geq 0 \quad \text{in } D_{1/2}
    \end{cases}
\end{equation}
First one is clear from definition as both $v_i$ and $g_{i,y}$ are harmonic and $\ve_i C_\alpha|y|^\alpha$ is constant as $y$ is fixed. For the second one we have 
\begin{align*}
    w_{i,\alpha,y}^{\pm} &= C_y v_i \pm g_{i,y} +\ve_i C_\alpha|y|^{\alpha} \\
    &\geq (C_y - |g_{i,y}|\,) +\ve_i C_\alpha|y|^{\alpha}  \quad \text{by (\ref{veqn}) on }\partial^+ \ball {1/2} \\
    &\geq \ve_i C_\alpha|y|^{\alpha} > 0 \quad \text{ by }(\ref{supest})
\end{align*}
For third  one we first note that on $D_{1/2}$ we have
\begin{align}
    \frac{\ve_i}{\ui(\ui+1)}\pdv{v_i}{\nu} +\vi
    &=\frac{4\ve_i}{3} \pdv{\vi}{\nu} + \bigg(\frac{1}{\ui(\ui+1)} - \frac{4}{3} \bigg)\ve_i \pdv{\vi}{\nu}  +\vi   \\
    &\geq \frac{4\ve_i}{3}  \pdv{v_i}{\nu} +\vi =0 \text{ by (\ref{veqn})} \label{vis0}
\end{align}
The inequality holds because the term in bracket is non-positive as $1/2\leq u_i \leq 1$ (\ref{eqn:bdry}), and because $\pdv{\vi}{\nu}\leq 0$ by (\ref{veqn}) as $\vi \geq0$ on $D_{1/2}.$ Hence, we have
\begin{align*}
    \frac{\ve_i}{\ui(1+\ui)} \pdv{w_{i,\alpha,y}^{\pm}}{\nu} + w_{i,\alpha,y}^{\pm} &\geq \pm \bigg(\frac{\ve_i}{\ui(\ui+1)} \pdv{g_{i,y}}{\nu}+ g_{i,y} \bigg)  + \ve_i C_\alpha|y|^{\alpha} \text{ by (\ref{vis0})}\\
    &=  \pm f_{i,y} + \ve_i C_\alpha|y|^{\alpha} \text{ by (\ref{geqn}) on } D_{1/2} \\ 
    &=\ve_i \big[C_\alpha|y|^{\alpha} \pm f_{i,y} \big] \quad  \\
    &\geq \ve_i \big[C_\alpha|y|^{\alpha}-C_\alpha |y|^\alpha \big] \quad \text{ by (\ref{eqn:gfest})} \\
    &\geq 0    
\end{align*}
    
Therefore $w_{i,\alpha,y}^{\pm}$ satisfies (\ref{weqn}). Observe that if $w_{i,\alpha,y}^{\pm} < 0$, then there is a point of negative minimum $x \in D_{1/2}$ as $w^{\pm}_{i,\alpha,y}\geq 0$ on $\partial^+ \ball {1/2}$. By Hopf boundary point lemma at $x$, this contradicts the boundary condition on $D_{1/2}$. So, $w_{i,\alpha,y}^{\pm} \geq 0$ in $\overline{\ball {1/2}}$ and therefore we have
\begin{align*}
    |g_{i,y}(x)| &\leq C_y \vi(x) + \ve_iC_{\alpha}|y|^{\alpha}  \quad \text{for } x\in \overline{\ball {1/2}} \\
    &\leq \ve_i|y|^{\alpha} \big[ C'_\alpha C + C_\alpha \big] \quad \text{for } x\in D_{1/4} \text{ by (\ref{cprimeeqn}) and (\ref{veboundary}) }
\end{align*}
Combining with estimates  (\ref{eqn:hest}),(\ref{supest}), and (\ref{veboundary}), we get
\begin{equation*}
    \frac{1}{\ve_i}\bigg|1-u_i(x)\bigg| + \frac{1}{\ve_i}\bigg|\frac{u_i(x+y)-u_i(x)}{|y|^\alpha} \bigg| \leq \frac{8}{3} + CC'_\alpha + C_\alpha \quad \text{for every } x,y \in D_{1/4}
\end{equation*}
This is the required estimate (\ref{eqn:convlemmaest}).
\end{proof}

\section{Small energy regime}
In this section we prove two results under a small energy assumption. The first one is a clearing out result on the boundary. The second is the epsilon regularity result. We emphasize that these are proved for $\ue$, with estimates that are uniform in $\ve$. This is crucial for their application in the study of $\ue$ as $\ve \to 0$.

\subsection{Clearing out on the boundary} This is an important consequence of the monotonicity formula. It captures the intuition that if the energy is small enough then $\ue$ stays close to the potential wells uniformly with respect to $\ve$.
\begin{lemm} \label{lemmaclearing}
Let $\ue$ be a solution of (\ref{eqn:main}) for $R=1$ such that $|\ue| \leq 1$. There is a constant $\eta$ independent of $\ve$ such that such that $E_\ve(\ue,\ball 1)\leq \eta$ implies $|\ue| \geq \frac{1}{2}$ on $D_{1/2}$.
\end{lemm}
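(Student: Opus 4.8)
\emph{Strategy.} The plan is to argue by contradiction using a blow-up at the scale $\ve$, combined with the monotonicity formula and the boundary regularity estimate from \cite{csm}. If the lemma failed, there would be $\ve_i\in(0,1)$, solutions $u_i$ of (\ref{eqn:main}) with $R=1$, $|u_i|\le1$ and $E_{\ve_i}(u_i,\ball1)\le 1/i$, together with points $x_i\in D_{1/2}$ where $|u_i(x_i)|<1/2$. The goal is to extract from $\{u_i\}$ a limit profile forced to be a constant equal to $\pm1$ near the origin, which contradicts $|u_i(x_i)|<1/2$.

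\emph{Rescaling.} First I would set $\tilde u_i(y):=u_i(x_i+\ve_i y)$. Because $|x_i|<1/2$ and $\ve_i<1$, this is a $C^2$ solution on $\ball{1/4}\cup D_{1/4}$ of $\Delta\tilde u_i=0$ in $\ball{1/4}$ and $\partial_\nu\tilde u_i=\tilde u_i-\tilde u_i^3$ on $D_{1/4}$, with $|\tilde u_i|\le1$ and $|\tilde u_i(0)|=|u_i(x_i)|<1/2$. The reason to rescale by exactly $\ve_i$ is that the $\ve_i$-singular Neumann datum $\tfrac1{\ve_i}(u_i-u_i^3)$ is replaced by $\tilde u_i-\tilde u_i^3$, which is bounded by $1$ uniformly in $i$. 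The energy smallness persists after rescaling: changing variables gives, for $0<R\le1/4$,
\begin{equation*}
    E_1(\tilde u_i,\ball R)=\ve_i^{1-n}\,E_{\ve_i}(u_i,\ball{\ve_i R}(x_i))=R^{n-1}\,I_{\ve_i}(\ve_i R,x_i),
\end{equation*}
and since $I_{\ve_i}(1,0)=E_{\ve_i}(u_i,\ball1)\le1/i$ while $\ve_i R<1/2<1-|x_i|$, the corollary to the monotonicity formula yields $I_{\ve_i}(\ve_i R,x_i)\le2^{n-1}/i$. Hence $E_1(\tilde u_i,\ball R)\to0$ as $i\to\infty$ for every fixed $R\in(0,1/4]$.

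\emph{Limit and contradiction.} Since $|t-t^3|\le1$ for $|t|\le1$, the Neumann datum of $\tilde u_i$ is uniformly bounded, so the theorem from \cite{csm} (with $f(t)=t-t^3\in C^{1,\alpha}$ and $R=1/4$) gives $\|\tilde u_i\|_{C^{2,\alpha}(\overline{\ball{1/16}})}\le C(n,\alpha)$ independently of $i$. By Arzel\`a--Ascoli, along a subsequence $\tilde u_i\to u_*$ in $C^2(\overline{\ball{1/16}})$, and $u_*$ is harmonic in $\ball{1/16}$, satisfies $\partial_\nu u_*=u_*-u_*^3$ on $D_{1/16}$, $|u_*|\le1$, and $|u_*(0)|\le1/2$. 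The energy passes to the limit, so $E_1(u_*,\ball{1/16})=0$, whence $\int_{\ball{1/16}}|\nabla u_*|^2=0$ and $\int_{D_{1/16}}W(u_*)\mh=0$. The first forces $u_*$ to be constant on $\ball{1/16}$, and because $W$ vanishes only at $\pm1$ the second forces that constant to be $\pm1$, contradicting $|u_*(0)|\le1/2$. This contradiction furnishes the desired $\eta>0$.

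\emph{Main difficulty.} The crux is to set up the blow-up so that the equation and the energy smallness survive simultaneously: a naive limit of $\{u_i\}$ fails because $\tfrac1{\ve_i}(u_i-u_i^3)$ can be unbounded precisely near $x_i$, while rescaling at any scale other than $\ve_i$ would spoil the PDE. It is exactly the scale-invariance of $I_\ve(r,x)=r^{1-n}E_\ve(u,\ball r(x))$ --- the quantity governed by the monotonicity corollary --- that keeps the rescaled energies tending to $0$. Once that is arranged, the boundary regularity, compactness, and rigidity of the blow-up limit are routine.
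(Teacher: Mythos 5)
Your proof is correct and follows essentially the same route as the paper: rescale to the unit $\ve$-scale about the bad boundary point, use the monotonicity corollary to keep the rescaled energy small, and apply the $C^{2,\alpha}$ estimate of \cite{csm} to get compactness and a limit that energy rigidity forces to be $\pm 1$. The only difference is organizational --- the paper first proves the $\ve=1$ case by compactness and then reduces general $\ve$ to it by rescaling, whereas you fold both steps into a single contradiction argument.
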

\begin{proof}

We first consider solution $u$ for $\ve =1$ and prove the claim by contradiction. Then we will prove it for any $\ve<1$ by rescaling. As $|u| \leq 1$, we have by (\ref{csmest}) that $\norm{u}_{C^{2,\alpha}(\ball {1/2})} \leq C_{\alpha}$. Let $\eta_1=2^{n-1}\eta$ such that $E_\ve(\ue,\ball 1)\leq \eta_1$. If the result is not true then we can find a sequence of solutions $\ui = u_{\ve_i}$ of decreasing energy and points $x_i \in D_{1/2}$ such that $|\ui(x_i)|\leq 1/2$ and $E_1(u_i,\ball 1) \to 0$. Due to the uniform estimate we have uniform convergence of $u_i$ in $\overline{\ball {1/2}}$. As $E_1(u_i,\ball 1) \to 0$, $u_i \to 1$ on $D_{1/2}$ which contradicts $|\ui(x_i)|\leq 1/2$. This proves the case $\ve=1$. 

Now for any $\epsilon<1$, for any solution $\ue$ and any point $x_0 \in D_{1/2}$,  consider the map $x \to x_0 + \ve x$ sending $\ball 1 \to \ball \ve (x_0)$. Then $v_{\Tilde{\ve}}(x)=\ue(x_0+\ve x)$ satisfies (\ref{eqn:main}) for $\ve=1$ and $R=1$. Further note that $E_1(v_{\Tilde{\ve}}, \ball 1)= I_{\ve}(\ve,x_0) \leq 2^{n-1} \eta=\eta_1$, the inequality is by (\ref{eqn:mflemma}). So we can apply the $\ve=1$ result which gives $|v_{\Tilde{\ve}}| \geq 1/2$ on $D_{1/2}$, but this is same as $|\ue| \geq 1/2$ on $D_{\frac{\ve}{2}}(x_0)$ for any $x_0 \in D_{1/2}$.
\end{proof}

\subsection{Epsilon regularity} We now prove the epsilon regularity result. It will become clear soon that it should be thought of as stating that if the energy bound is small enough, then the gradient of $\ue$ is bounded uniformly independent of $\ve$ \textit{up to the boundary.}

\begin{theo}  There exist constants $\eta_0$ and $C_0$ independent of $\ve$ such that for $\ve <R$, and $u_\ve \in C^2(\ball R \cup D_R)$ satisfying $|u_\ve| \leq 1$ solving 

    \begin{equation} \label{eqn:Rscale}
        \begin{cases}
            \Delta \ue = 0 \qquad &\text{in } B_R^+ \\
            \pdv{\ue}{\nu} = -\frac{1}{\ve}W'(u_\ve) \qquad  &\text{on  } D_R
        \end{cases}
    \end{equation}
If we have $I_{\ve}(R,0) \leq \eta_0$, then 
    \begin{equation} \label{epsreg}
        \sup_{\ball {\frac{R}{4}}} |\nabla \ue|^2 + \sup_{D_{\frac{R}{4}}} \frac{W(\ue)}{\ve ^2} \leq \frac{C_0}{R^2}\eta_0
    \end{equation}
\end{theo}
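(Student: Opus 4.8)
\emph{Strategy.} The plan is threefold: (i) reduce to the scale $R=1$ by exploiting the natural scaling of (\ref{eqn:Rscale}); (ii) on $D_{R/4}$ replace the potential term $W(u_\ve)/\ve^2$ by the gradient term, using the Neumann condition together with the clearing-out lemma; (iii) prove the remaining gradient bound $\sup_{\overline{\ball{1/4}}}|\nabla u_\ve|^2\le C_0\eta_0$ by a contradiction and point-selection (blow-up) argument whose main ingredients are Lemma \ref{lemmaclearing}, Lemma \ref{lem:conv}, and the corollary to the monotonicity formula.

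\emph{Reductions.} If $u_\ve$ solves (\ref{eqn:Rscale}) on $B_R^+$, then $v(y):=u_\ve(Ry)$ solves the same system on $B_1^+$ with parameter $\ve/R<1$; a change of variables gives $I_{\ve/R}(1,0)=I_\ve(R,0)$ as well as $\sup_{\ball{1/4}}|\nabla v|^2=R^2\sup_{\ball{R/4}}|\nabla u_\ve|^2$ and the analogous identity for $W(\cdot)/\ve^2$, so it suffices to treat $R=1$. Choosing $\eta_0\le\eta$, Lemma \ref{lemmaclearing} gives $|u_\ve|\ge\tfrac12$ on $D_{1/2}$, so on $D_{1/4}$ the boundary condition yields $\big|\tfrac{1-u_\ve^2}{\ve}\big|=\big|\tfrac{\partial_\nu u_\ve}{u_\ve}\big|\le 2|\nabla u_\ve|$, hence $\tfrac{W(u_\ve)}{\ve^2}\le|\nabla u_\ve|^2$ there. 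Everything thus reduces to proving $\sup_{\overline{\ball{1/4}}}|\nabla u_\ve|^2\le C_0\eta_0$.

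\emph{Point selection.} Suppose this fails for every $\eta_0,C_0$. Testing the negation with $\eta_0=k^{-1/2}$, $C_0=k$ and using the reduction, we obtain solutions $u_k$ of (\ref{eqn:conveqn}) with parameters $\ve_k<1$, $|u_k|\le1$, $\beta_k:=I_{\ve_k}(1,0)\le k^{-1/2}\to0$, and $\Theta_k:=\sup_{x\in\overline{\ball{1/2}}}\big(\tfrac12-|x|\big)^2|\nabla u_k(x)|^2\to\infty$ with $\Theta_k/\beta_k\to\infty$. Let $x_k$ attain the supremum, $d_k:=\tfrac12-|x_k|>0$, $E_k:=|\nabla u_k(x_k)|^2$, so $d_k^2E_k=\Theta_k$ and $|\nabla u_k|^2\le 4E_k$ on $B_{d_k/2}(x_k)\cap\overline{\ball1}$. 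With $r_k:=E_k^{-1/2}$, $p_k$ the $\RR^n$-projection of $x_k$, $\tau_k:=|x_k-p_k|/r_k$, $\rho_k:=d_k/(2r_k)=\tfrac12\Theta_k^{1/2}\to\infty$, set $\tilde u_k(y):=u_k(p_k+r_ky)$. Then $\tilde u_k$ solves the system with parameter $\tilde\ve_k:=\ve_kE_k^{1/2}$, $|\tilde u_k|\le1$, $|\nabla\tilde u_k|\le2$ on $\ball{\rho_k-\tau_k}$, $|\nabla\tilde u_k(\tilde x_k)|^2=1$ at $\tilde x_k:=(0,\tau_k)$, and by the monotonicity corollary $\tilde I_{\tilde\ve_k}(\rho,0)=I_{\ve_k}(\rho r_k,p_k)\le 2^{n-1}\beta_k\to0$ for every fixed $\rho$.

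\emph{Blow-up, and the obstacle.} Pass to a subsequence. If $\tau_k\to\infty$, recenter at $x_k$: $\hat u_k(y):=u_k(x_k+r_ky)$ is harmonic and $2$-Lipschitz on balls $B_{R_k}(0)$ with $R_k\to\infty$ and $|\hat u_k|\le1$, so a subsequence converges in $C^\infty_{loc}$ to a bounded entire harmonic function, necessarily constant; thus $|\nabla\hat u_k(0)|\to0$, contradicting $|\nabla\hat u_k(0)|^2=1$. If $\tau_k\to\tau_\infty<\infty$, split on $\tilde\ve_k$: when $\tilde\ve_k\to\tilde\ve_\infty<\infty$ the Neumann data $\tfrac1{\tilde\ve_k}(\tilde u_k-\tilde u_k^3)$ is bounded in $C^{0,\alpha}_{loc}(\RR^n)$, so (\ref{csmest}) gives $\tilde u_k\to u_*$ in $C^{1,\alpha}_{loc}(\overline{\RR^{n+1}_+})$; when $\tilde\ve_k\to0$ the monotonicity corollary and Lemma \ref{lemmaclearing} give $|\tilde u_k|\ge\tfrac12$ on every fixed $D_\rho$ for $k$ large, so Lemma \ref{lem:conv} (applied on balls of every radius, its proof being unchanged if the Lipschitz bound $2$ is replaced by any constant) yields the same $C^{1,\alpha}_{loc}$ convergence. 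In either case $\tilde I_{\tilde\ve_k}(\rho,0)\to0$ forces $\int_{\ball\rho}|\nabla u_*|^2=0$ for all $\rho$, so $u_*$ is constant, whence $|\nabla\tilde u_k(\tilde x_k)|\to|\nabla u_*(0,\tau_\infty)|=0$, contradicting $|\nabla\tilde u_k(\tilde x_k)|^2=1$. This exhausts the cases. The delicate regime is $\tilde\ve_k\to0$ with the blow-up point $\tilde x_k$ on or near $\RR^n$: there the normalized gradient must be controlled \emph{up to the flat boundary}, which is precisely the content of Lemma \ref{lem:conv}; this is why the clearing-out lemma is needed (to verify its hypothesis $|\tilde u_k|\ge\tfrac12$) and the scale-invariant monotonicity corollary is needed (to carry the energy smallness to every blow-up scale). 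The other point demanding care is the choice $\eta_0=k^{-1/2}$, which forces $\Theta_k\to\infty$ — not merely $\Theta_k/\beta_k\to\infty$ — so that the rescaled half-balls $\ball{\rho_k-\tau_k}$ genuinely exhaust $\RR^{n+1}_+$.
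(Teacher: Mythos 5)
Your proof is correct and follows essentially the same strategy as the paper's: reduction to $R=1$, clearing-out to absorb $W(u_\ve)/\ve^2$ into the gradient, Schoen-type point selection, and a blow-up dichotomy according to the rescaled height of the selected point — an interior Liouville/compactness argument far from $D_1$ (where the paper instead uses the mean value property of the subharmonic function $|\nabla u_\ve|^2$ at the original scale) and Lemma \ref{lem:conv} near $D_1$ — with your diagonal choice $\eta_0=k^{-1/2}$, $C_0=k$ making the blow-up energy vanish, so the contradiction is "limit constant but $|\nabla u_*|=1$ at the blow-up point" rather than the paper's quantitative bound against a fixed small $\eta_0$. The only slip is the dichotomy on $\tilde\ve_k$: the branches should be "$\tilde\ve_k\to 0$" versus "$\tilde\ve_k$ bounded away from $0$" (the latter including $\tilde\ve_k\to\infty$, where the Neumann data still has bounded $C^{0,\alpha}$ norm and (\ref{csmest}) applies), rather than "$\tilde\ve_\infty<\infty$" versus "$\tilde\ve_k\to0$", which overlap at $0$ and omit $\infty$.
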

\begin{rem}
We use an idea due to Schoen \cite{s} in harmonic maps setting, also used in similar geometric problems by several authors \cite{cs,cw,ms,t,w}. To obtain an estimate independent of $\ve$ we need a scale $r_{\ve}$ for which the gradients are uniformly bounded. Then the problem reduces to having a uniform bound on $r_{\ve}$. In \cite{s} this is done by using the mean value property to get a contradiction to the smallness of energy. However, due to the boundary we may only use the mean value property for $|\nabla \ue|^2$ only for points sufficiently away from the boundary. In the other case, we will rescale the problem to $r_{\ve}$-scale and then use the convergence lemma \ref{lem:conv} to get a contradiction to smallness of energy
\end{rem}

\begin{proof}
It is sufficient to prove the result for $R=1$. First observe that we have 
$$\frac{W(u)}{\ve^2}=\frac{1}{4u^2}\frac{W'(u)^2}{\ve^2} = \frac{1}{4u^2}\bigg|\pdv{\ue}{\nu}\bigg|^2 $$
We may assume that $I_\ve \leq \eta$, then by clearing out result (\ref{lemmaclearing}), $1/2 \leq|u_\ve|\leq 1$ on $D_{1/2}$ and therefore
\begin{equation*}
    \frac{W(u)}{\ve^2}= \frac{1}{4u^2}\bigg|\pdv{\ue}{\nu}\bigg|^2  \leq \bigg|\pdv{\ue}{\nu}\bigg|^2
\end{equation*}

Therefore to establish (\ref{epsreg}), it is enough estimate the gradient $|\nabla \ue|$ upto the boundary, i.e. we need to show
\begin{equation} \label{gradest}
    \sup_{\overline{\ball {\frac{1}{4}}}} |\nabla \ue| \leq C\sqrt{\eta_0} 
\end{equation}

Consider the distance weighted gradient on $\overline{\ball {\frac{1}{2}}}$,  $F(s)=(\frac{1}{2}-s)|\nabla \ue(x)|$. It attains its maximum for some $s_{\ve} \in [0,1/2]$ and we have 
$$\max _{s} F(s) = \max_{s} \bigg(\frac{1}{2}-s \bigg) \sup_{\overline{\ball s}}|\nabla \ue| =\bigg(\frac{1}{2}-s_{\ve}\bigg) \sup_{\overline{\ball {s_{\ve}}}} |\nabla \ue| $$

Let $x_\ve$ be such that $\sup_{\overline{\ball {s_{\ve}}}} |\nabla \ue|=|\nabla \ue(x_\ve)| =e_\ve$. Note that due to the definition of $F(s)$, $|x_{\ve}|=s_{\ve}$, so $\dist{(x_\ve, \partial^+ \ball \frac{1}{2})}= \frac{1}{2}-s_\ve$. We write $$ \frac{1}{2}-s_\ve = 2\rho_\ve \quad \text{and} \quad r_\ve = \rho_\ve e_\ve$$

Then we have $\max_{s} F(s) =2\rho_\ve  e_\ve = 2r_\ve $. Taking $s=\frac{1}{4}$ gives
\begin{equation} \label{eqn:grad1}
    \sup_{\overline{\ball {\frac{1}{4}}}} |\nabla \ue| \leq 8 r_\ve
\end{equation}
Therefore the gradient estimate would follow from a uniform bound on $r_\ve$. We collect another consequence of definition of $F$ that will be used later.
\begin{equation} \label{eqn:gs}
    |\nabla \ue|(x) \leq \frac{(\frac{1}{2}-s_\ve)}{(\frac{1}{2}+s_\ve)}2 e_\ve < 2e_\ve \quad \text{for all } x \in B_{\rho_\varepsilon}(x_\ve) \cap \overline{\ball {\frac{1}{2}}} 
\end{equation}

Let $\overline{x_\ve}$ be projection of $x_\ve$ on $D_1$. Then denote by $z_\ve$ the height of $x_\ve$, i.e $z_\ve = x_\ve - \overline{x}_{\ve}$. Depending on how the height of $x_\ve$ compares to it is distance from the spherical boundary we get the above described two cases.

      \textit{Case 1:}   $ \frac{z_\ve}{2\rho_\ve} > \frac{1}{4}$ i.e., away from  $D_1$\\
           The ball $B_{\frac{z_\ve}{2}}(x_\ve) \subset \ball {2z_\ve}(\overline{x}_\ve) \subset \ball 1 $, then by mean value property for $|\nabla u_\ve|^2$ we have 
        \begin{equation} \label{eqn:case1}
            e_\ve ^2 \leq \frac{1}{|B_{\frac{z_\ve}{2}}(x_\ve)|}\int_{B_{\frac{z_\ve}{2}}(x_\ve)} |\nabla u_\ve|^2 \,dx 
                    \leq \frac{1}{4z_{\ve}^2}\cdot C\eta_0  
        \end{equation}
        The second inequality follows from (\ref{eqn:mflemma}). Combining this with (\ref{eqn:grad1}) gives the desired estimate
        $$ \sup_{\overline{\ball {\frac{1}{4}}}} |\nabla \ue| \leq 8 r_\ve < 8\cdot  2z_\ve e_\ve \leq C\sqrt{\eta_0} $$   

\textit{Case 2: }$ \frac{z_\ve}{2\rho_\ve} \leq \frac{1}{4}$ i.e., close to $D_1$\\
            Given (\ref{eqn:grad1}), if $r_\ve \leq 1$, then we are done. So we assume otherwise, i.e $r_\ve > 1$ and arrive at a contradiction. For this, we will consider the problem at the $r_\ve$-scale. Consider $\ue$ solving (\ref{eqn:Rscale}) in the ball $\ball {\rho_{\ve}}(\overline{x}_\ve) \subset \ball {\frac{1}{2}} $. Then for $x \in \ball {r_\ve} \cup D_{r_\ve}$, with $\Tilde{\ve} = \ve e_\ve$, take
            $v_{\Tilde{\ve}}(x) = \ue (\overline{x}_\ve + x/e_\ve) $. With this rescaling $\ball {\rho_\ve}(\overline{x}_\ve)$ goes to $\ball {r_\ve}$. As $r_\ve > 1$, all $v_{\Tilde{\ve}}$ solve
            \begin{equation} \label{eqn:rscale}
        \begin{cases}
            \Delta v_{\Tilde{\ve}} = 0 \qquad &\text{in } \ball {1} \\
            \pdv{v_{\Tilde{\ve}}}{\nu} = -\frac{1}{\Tilde{\ve}}W'(v_{\Tilde{\ve}}) \qquad  &\text{on  } D_{1}
        \end{cases}
    \end{equation}
           Further due to our assumptions, and (\ref{eqn:gs}) we have
        \begin{equation} \label{eqn:convestimates}
            |\nabla v_{\Tilde{\ve}} (y_\ve)|=1 ,|\nabla v_{\Tilde{\ve}}| \leq 2,|v_{\Tilde{\ve}}| \leq 1 \quad \text{in } B_1^+ \cup D_1 \text{and } |v_{\Tilde{\ve}}|\geq 1/2 \quad \text{on } D_1 
        \end{equation}
        further using (\ref{eqn:mflemma}) we also get
        \begin{equation}\label{eqn:energyconv}
            E_{\Tilde{\ve}}(v_{\Tilde{\ve}}, B_1)<E_{\Tilde{\ve}}(v_{\Tilde{\ve}}, B_{r_\ve})=I_\ve(\rho_\ve,\overline{x}_\ve) \leq 2^{n-1} \eta_0
        \end{equation}
            Here $y_\ve = e_\ve(x_\ve-\overline{x}_\ve)$, so $|y_\ve|= z_\ve e_\ve$. Note that $z_\ve \leq \frac{1}{2}\rho_\ve$ is equivalent to $z_\ve e_\ve \leq r_\ve/2$ but we need $y_\ve \in \ball {1/2}$, indeed this is the case i.e.   $z_\ve e_\ve \leq 1/2$ as if we have $z_\ve > \frac{1}{2e_\ve}$ then exactly like (\ref{eqn:case1}) we have
            $$ e_\ve^2 \leq \frac{1}{4z_\ve^2}\cdot C\eta_0 \leq e_\ve^2\cdot C\eta_0$$ this gives $1 \leq C\eta_0 $
            which is false for $\eta_0$ small enough. Therefore, $y_\ve \in \ball {1/2}$. To simplify notation we write, $v_i=v_{\Tilde{\ve_i}}$ and $y_i=y_{\ve_{i}}$. 
            
            We will show that as $i \to \infty$, we have $v_i \to v_*$ $C^{1,\alpha}_{loc}(\ball 1 \cup D_1)$, then it will give us
            
            \begin{equation}\label{assump}
                \begin{cases}
                 \nabla v_i(y_i) \to \nabla v_*(y_*) \text{ thus } |\nabla v_*(y_*)|=1 \text{ as } |\nabla v_i(y_i)|=1  \\
               \text{There is a }\sigma<\frac{1}{10} \text{ such that }|\nabla v_*|  \geq 1/2 \text{ on }B_{\frac{1}{10}}(y_*)\cap \ball 1 
            \end{cases}
            \end{equation}
    This gives us the following. The last inequality is due to the estimate (\ref{eqn:energyconv}).
        \begin{equation}
            \frac{\sigma^{n+1}|\ball 1|}{2} \leq \int_{\ball 1}|\nabla v_*|^2\,dx \leq \liminf_{i\to \infty} \int_{\ball 1} |\nabla v_i|^2 \,dx \leq 2^{n-1} \eta_0
        \end{equation}    

This leads to a contradiction for small enough $\eta_0$. Therefore $r_\ve <1$ as desired.

We now just need to show that $v_i \to v_*$ in $C^{1,\alpha}_{loc}(\ball 1 \cup D_1)$.  Recall that $\Tilde{\ve_i}=\ve_i e_{\ve_i}$. We have two cases: As $\ve_i \to 0$ we also have $\Tilde{\ve}_i \to 0$ Then because of the estimates (\ref{eqn:convestimates}), we can apply lemma (\ref{lem:conv}) to $\{v_i\}$.This gives uniform $C^{1,\alpha}_{loc}$ convergence $v_i$ to $v_*$ up to the boundary as required. However, if as $\ve_i \to 0$, $\Tilde{\ve}_i \nrightarrow 0$, that is $\Tilde{\ve}_i= e_{\ve_i} \ve_i \geq \beta >0$. Then by (\ref{csmest}) we have uniform $C^{2,\gamma}$ estimate up to the boundary, $\norm{v_i}_{C^{2,\gamma}_{loc}(\overline{\ball {3/4}}) } \leq C_{\gamma}$. By Holder interpolation, this gives uniform $C^{1,\alpha}_{loc}$ convergence $v_i$ to $v_*$ up to the boundary in this case as well. This finishes the proof.
\end{proof}

With the monotonicity formula, convergence lemma and the epsilon regularity result we can now study the behavior of $\ue$ and the associated energy as $\ve \to 0$. 
\section{The energy concentration set and its properties}
In this section we introduce the concentration set of the energy and prove several results about it using the tools developed in the last two sections. Throughout this section, we will have the following assumptions unless otherwise stated. $\{u_i\}_{i\in \NN} \subset H^1(U)$ are the critical points of $E_{\ve_i}$, satisfying

 \begin{equation} \label{sec4eqn}
        \begin{cases}
            \Delta u_i = 0 \qquad &\text{in } U \\
            \pdv{u_i}{\nu} = -\frac{1}{\ve_i}W'(\ue) \qquad  &\text{on  } \partial^0 U \\
            |u_i|\leq 1$, $E_{\ve_i}(u_i) \leq E_0
        \end{cases}
\end{equation}

\subsection{Limiting energy measure and its density}Consider the energy measures on $U \cup \partial^0U$,
$$ \mu_i = \frac{1}{2}|\nabla u_i|^2 \,dx + \frac{1}{\ve_i}W(u_i) \,\mh $$
Because $\mu_i(U \cup \partial^0U) \leq E_0$, after passing to a subsequence there exists a Radon measure $\mu$ on $U\cup \partial^0U$, such that $ \mu_i \rightharpoonup \mu $. For any $x \in \partial^0U$, and $0<s<r<\dist(x,\partial^+U)$, using the monotonicity formula (\ref{eqn:mf1}), with $i \uparrow \infty$ gives 
$$ \frac{1}{s^{n-1}} \mu(\ball s (x)) \leq \frac{1}{r^{n-1}} \mu(\ball r (x)) < \frac{E_0}{r^{n-1}} $$
Hence, the $(n-1)$ density of the measure $\mu$ is well-defined and finite 
\begin{equation}
    \Theta^{n-1}(\mu,x) = \lim_{r \downarrow 0} \frac{\mu(\ball r (x))}{\omega_{n-1}r^{n-1}}
\end{equation}
Here, $\omega_{n-1}$ is the volume of $(n-1)$ dimensional unit ball. We will write $\omega_{n-1}$ is $1$ to simplify notation. We first observe that the density is zero for points in the interior. 

\begin{prop} \label{lem0}
 If $x \in U$ then $\Theta^{n-1}(\mu,x)=0$.
\end{prop}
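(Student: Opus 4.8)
The plan is to exploit the fact that ``$x\in U$'' places $x$ in the \emph{open} set $U\subset\RR^{n+1}_+$, hence at positive distance from $\{x_{n+1}=0\}\supset\overline{\partial^0U}$. On a small ball around such an $x$ the singular part $\tfrac{1}{\ve_i}W(u_i)\mh$ of $\mu_i$ is therefore absent, and each $u_i$ is merely a harmonic function with $|u_i|\le 1$; the density statement then reduces to an elementary interior gradient estimate for harmonic functions. No monotonicity formula subtlety is needed — in fact the interior portion of the picture is the trivial one.

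Concretely, I would first set $r_0=\dist(x,\partial U)>0$, so that $B_{r_0}(x)\subset U$ and $B_{r_0}(x)\cap\RR^n=\emptyset$; consequently $\mu_i\measurer B_{r_0}(x)=\tfrac12|\nabla u_i|^2\,dx$, since the $\mh$-term is supported in $\partial^0U\subset\{x_{n+1}=0\}$. Next I would use that, $u_i$ being harmonic, $|\nabla u_i|^2$ is subharmonic on $B_{r_0}(x)$ (indeed $\Delta|\nabla u_i|^2=2|\Hess u_i|^2\ge 0$); combining the sub-mean-value inequality with the uniform energy bound $\int_{B_{r_0}(x)}|\nabla u_i|^2\le 2E_0$ gives, for every $y\in B_{r_0/2}(x)$,
\begin{equation*}
  |\nabla u_i(y)|^2\le\frac{1}{|B_{r_0/2}(y)|}\int_{B_{r_0/2}(y)}|\nabla u_i|^2\,dx\le C(n,E_0,r_0),
\end{equation*}
a bound independent of $i$. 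Hence $\mu_i(B_r(x))=\tfrac12\int_{B_r(x)}|\nabla u_i|^2\le C'(n,E_0,r_0)\,r^{n+1}$ for all $r<r_0/2$, and since $\mu_i\rightharpoonup\mu$ and $B_r(x)$ is open, lower semicontinuity gives $\mu(B_r(x))\le C' r^{n+1}$. Dividing by $r^{n-1}$ and letting $r\downarrow 0$,
\begin{equation*}
  \Theta^{n-1}(\mu,x)=\lim_{r\downarrow 0}\frac{\mu(B_r(x))}{r^{n-1}}\le\lim_{r\downarrow 0}C' r^{2}=0,
\end{equation*}
which is the claim. (Equivalently, one could extract a $C^\infty_{loc}(U)$ subsequential limit $u_*$ of the $u_i$ — bounded harmonic functions are locally precompact — identify $\mu\measurer U=\tfrac12|\nabla u_*|^2\,dx$ by uniqueness of weak limits, and then conclude because a measure with locally bounded Lebesgue density has vanishing $(n-1)$-density at \emph{every} point.)

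I do not expect a real obstacle here: the only point requiring care is the very first one, namely verifying that an interior point is uniformly separated from the boundary disc $\partial^0U$, so that the concentration-carrying term $\tfrac{1}{\ve_i}W(u_i)\mh$ does not enter near $x$. Once that is in place, standard interior elliptic estimates for harmonic functions (equivalently, the subharmonicity of $|\nabla u_i|^2$ together with the mean value property) do all the work, and the estimate $\mu(B_r(x))\lesssim r^{n+1}$ beats the normalization $r^{n-1}$ by two powers of $r$. This proposition should be viewed as a warm-up isolating the easy part of the analysis before the substantive work on $\partial^0U$ begins.
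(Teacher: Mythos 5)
Your proof is correct and is essentially the paper's argument: both exploit that near an interior point the measure reduces to $\tfrac12|\nabla u_i|^2\,dx$ for harmonic $u_i$, obtain a locally uniform gradient bound, and conclude $\mu(B_r(x))\lesssim r^{n+1}$, which beats the $r^{n-1}$ normalization. The only (immaterial) difference is that you bound $|\nabla u_i|$ uniformly along the sequence via subharmonicity of $|\nabla u_i|^2$ and then use portmanteau, whereas the paper first upgrades to $C^\infty_{loc}$ convergence and bounds $|\nabla u_*|$ for the bounded harmonic limit — which is exactly your parenthetical alternative.
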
 
\begin{proof}
    First note that the uniform energy bound and $|u_i| \leq 1$, gives us a weak limit $u_* \in H^1(U)$ for $\{u_i\}$. After passing to a subsequence $|\nabla u_i|^2 \to |\nabla u_*|^2$ pointwise almost everywhere so by Fatou's lemma we have,
    $$\int_U |\nabla u_*|^2 \,dx \leq \lim_{i \to \infty} \int_U |\nabla u_i|^2 \,dx$$
    We now show that this is actually an inequality for proper subsets of $U$. First we see that $u_*$ is not just a weak $H^1$ and pointwise limit, but in fact, $u_*$ is harmonic in $U$ and $u_i \to u_*$ in $C^{\infty}_{loc}(U)$ as $u_i$ are harmonic in $U$ and satisfy $|u_i| \leq 1$. Therefore, for a $x \in U$, let $r<\dist(x,\RR^n)$. Then on the ball $B_r(x)$, the $C^1_{loc}$ convergence gives equality in Fatou's lemma
    \begin{equation} \label{intconvu}
       \int_{B_r(x)} |\nabla u_*|^2 \,dx = \lim_{i \to \infty} \int_{B_r(x)} |\nabla u_i|^2 \,dx 
    \end{equation}
    and since $\mu_i(B_{r}(x)) = \int_{B_r(x)} |\nabla u_i|^2 \,dx$ we have 
    \begin{equation} \label{intmeasure}
        \mu(B_r(x)) = \int _{B_r(x)}|\nabla u_*|^2 \,dx
    \end{equation} 
    Therefore, we have
    \begin{align*}
        \Theta^{n-1}(\mu,x)&= \lim_{r \to 0}\frac{1}{r^{n-1}} \int_{B_r(x)} |\nabla u_*|^2 \,dx \\
        &\leq C_{n+1} \lim_{r\to0} r^{2} \quad \text{since }u_* \text{ is harmonic and }|u_*|\leq 1 \\
        &=0
    \end{align*}
\end{proof}

Now we consider $\Theta^{n-1}(\mu,x)$ for points on the boundary, $x \in \partial^0U$. As density is upper semicontinous we have for each $i$ and small enough $r>0$ a closed subset 
\begin{equation} \label{rel:concset0}
    \Sigma_{i,r} = \{x \in \partial^0U : r^{1-n} \mu_i(\ball r (x)) \geq \eta_0\}
\end{equation}

Here $\eta_0$ is the epsilon regularity threshold from Theorem \ref{epsreg}. From monotonicity we have $\Sigma_{i,s}\subset \Sigma_{i,r}$ for $s<r$. After a diagonal argument we can pass to a subsequence such that(without relabeling) $\Sigma_{i,2^{-k}}$ converges in a hausdorff distance sense to $\Sigma_{2^{-k}}$. Note that for $k<l$, $\Sigma_{2^{-l}} \subset \Sigma_{2^{-k}}$, and we set $\Sigma = \cap_{k} \Sigma_{2^{-k}}$. By construction $\Sigma$ is the concentration set of the energy, i.e set of all points where the density is not zero, hence above the epsilon regularity threshold $\eta_0$, namely 
\begin{equation} \label{rel:concset}
    \Sigma = \{x \in \partial^0 U : \Theta^{n-1}(\mu,x) \geq \eta_0 \}
\end{equation}

We now show that the concentration set $\Sigma$ is a $(n-1)$ dimensional object in $\RR^n \cong \partial \RR^{n+1}$
\begin{lemm}
  For $\Sigma$ defined in (\ref{rel:concset})  $\mathcal{H}^{n-1}(\Sigma) < \infty$. 
\end{lemm}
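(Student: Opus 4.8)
The statement is the standard upper-density estimate of geometric measure theory, adapted to half-balls. Recall that $\mu$ is a finite Radon measure with $\mu(U\cup\partial^0U)\le E_0$, that $\Sigma=\{x\in\partial^0U:\Theta^{n-1}(\mu,x)\ge\eta_0\}$, and that, with the convention $\omega_{n-1}=1$ and using that the monotonicity formula (\ref{eqn:mf1}) makes $r^{1-n}\mu(\ball r(x))$ monotone (hence the density a genuine limit), for each $x\in\Sigma$ there is $r_0(x)>0$ with $\ball{r_0(x)}(x)\subset U$ and
\begin{equation*}
\mu(\ball r(x))\ \ge\ \tfrac{\eta_0}{2}\,r^{n-1}\qquad\text{for all }0<r<r_0(x).
\end{equation*}

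Fix $\delta>0$. Apply the Vitali $5r$-covering lemma to the family $\{B_r(x):x\in\Sigma,\ 0<r<\min(\delta,r_0(x))\}$ of balls in $\RR^{n+1}$, all centred on $\RR^n=\partial\RR^{n+1}_+$; since every $x\in\Sigma$ is the centre of such a ball, this yields a countable pairwise disjoint subfamily $\{B_{r_j}(x_j)\}_j$ with $\Sigma\subset\bigcup_j B_{5r_j}(x_j)$. Because the centres lie on the hyperplane, disjointness of the balls $B_{r_j}(x_j)$ is equivalent to disjointness of the half-balls $\ball{r_j}(x_j)$ (each of which, together with its trace $D_{r_j}(x_j)$, lies in $U\cup\partial^0U$, so that $\mu(\ball{r_j}(x_j))$ is well defined and these quantities sum to at most $\mu(U\cup\partial^0U)$) and also to disjointness of the discs $D_{r_j}(x_j)$; and since $\Sigma\subset\RR^n\times\{0\}$, intersecting the cover with $\RR^n$ gives $\Sigma\subset\bigcup_j D_{5r_j}(x_j)$, a cover of $\Sigma$ by sets of diameter $10r_j<10\delta$.

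Now I estimate
\begin{equation*}
\mathcal{H}^{n-1}_{10\delta}(\Sigma)\ \le\ \sum_j\big(\diam D_{5r_j}(x_j)\big)^{n-1}\ =\ 10^{n-1}\sum_j r_j^{n-1}\ \le\ \frac{2\cdot10^{n-1}}{\eta_0}\sum_j\mu(\ball{r_j}(x_j))\ \le\ \frac{2\cdot10^{n-1}}{\eta_0}\,E_0,
\end{equation*}
where the first inequality computes $\mathcal{H}^{n-1}$ using covers by discs in $\RR^n$, the third uses the density lower bound, and the last uses disjointness of the half-balls together with $\mu(U\cup\partial^0U)\le E_0$. Letting $\delta\downarrow0$ gives $\mathcal{H}^{n-1}(\Sigma)\le\frac{2\cdot10^{n-1}}{\eta_0}E_0<\infty$ (and the same computation localized to an arbitrary open set shows $\mathcal{H}^{n-1}\measurer\Sigma\le\frac{2\cdot10^{n-1}}{\eta_0}\mu$, which will be convenient later). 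The argument is soft; the one point needing care is the bookkeeping in the second paragraph — translating disjointness of the half-balls $\ball{r_j}(x_j)$, which is what bounds $\sum_j\mu(\ball{r_j}(x_j))$ by $E_0$, into disjointness of their traces $D_{r_j}(x_j)$, which is what controls the $(n-1)$-dimensional cover of $\Sigma$ — together with the remark that $\Sigma=\bigcap_k\Sigma_{2^{-k}}$ is closed, hence Borel, though the estimate above holds for arbitrary subsets of $\partial^0U$.
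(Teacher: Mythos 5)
Your proof is correct and is essentially the same argument as the paper's: a Vitali-type covering of $\Sigma$ by half-balls with disjoint subfamily, the lower density bound $\mu(\ball r(x))\ge c\,\eta_0 r^{n-1}$ coming from the definition of $\Sigma$ together with the monotonicity formula, and summation against the total mass $E_0$. The only cosmetic differences are that the paper localizes to compact subsets of $U\cup\partial^0U$ and phrases the bound via $\mu_i$ for large $i$, while you work directly with the limit measure $\mu$ and the $5r$-covering lemma; both yield $\mathcal{H}^{n-1}(\Sigma)\le C_n E_0/\eta_0$.
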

\begin{proof}
    Let $K$ be any compact subset of $U \cup \partial^0 U$. Let $d_K= \dist(K, \partial^+U)$. Then, for any $0<r< d_K$, we can take a finite subcover of $\Sigma\cap K$, $\{\ball {r_k}(x_k)\}$ such that $\{\ball {r_k/2}(x_k) \}$ are disjoint, $r_k < r$, and $x_k \in \Sigma$.  Then by (\ref{rel:concset}) and monotonicity formula we have for large enough $i$
    \begin{equation}
        \eta_0 \leq \frac{1}{(r_k/2)^{n-1}} \mu_{i}(\ball {r_k}(x_k)) \leq  \frac{C_n}{r_k^{n-1}} E_0   
    \end{equation}
    Summing over $k$ gives us          
   \begin{equation*}
        \sum _{k} r_k^{n-1} \leq \frac{1}{\eta_0} C_n E_0 
    \end{equation*}
Therefore, for all compact sets $K$ we have $\mathcal{H}^{n-1}(\Sigma\cap K) \leq \frac{1}{\eta_0} C_n E_0 $, hence
$$\mathcal{H}^{n-1}(\Sigma) \leq \frac{C_n}{\eta_0}E_0 < \infty$$
\end{proof}

\subsection{Structure of limiting measure}
We now prove that the potential vanishes in the limit. This will allow us to clearly describe the relationship between limit measure $\mu$ and the limiting function $u_*$.
\begin{prop} \label{vanishing}
Let $\{u_i\}_{i \in \NN}$ satisfy the conditions in (\ref{sec4eqn}), then as Radon measures on $\partial^0 U$,
$$\frac{W(\ui)}{\ve_i}\,d\mathcal{H}^n \rightharpoonup 0 $$
\end{prop}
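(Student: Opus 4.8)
The plan is to show that every subsequential weak-$*$ limit $\nu$ of the boundary potential measures $\nu_i := \frac{1}{\ve_i}W(\ui)\mh$ on $\partial^0 U$ is the zero measure; since $\nu_i(\partial^0 U) \le E_{\ve_i}(\ui) \le E_0$, this is equivalent to the stated convergence. Fix such a subsequence (not relabelled), $\nu_i \rightharpoonup \nu$. The first step is to localize $\nu$ to the concentration set: if $x \in \partial^0 U \setminus \Sigma$, then by the construction of $\Sigma$ and the Hausdorff convergence $\Sigma_{i,2^{-k}} \to \Sigma_{2^{-k}}$ there are $k$ and $r = 2^{-k}$ with $I_{\ve_i}(r,x) < \eta_0$ for all large $i$; the epsilon-regularity theorem then gives $\sup_{D_{r/4}(x)} \frac{W(\ui)}{\ve_i^2} \le \frac{C_0 \eta_0}{r^2}$, hence $\frac{1}{\ve_i}W(\ui) \le \frac{C_0\eta_0}{r^2}\ve_i \to 0$ uniformly near $x$, so $\supp \nu \subset \Sigma$.

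For the second step I would show that $\Theta^{n-1}(\nu,x) = 0$ for every $x \in \partial^0 U$. Fix $x$ and $r$ with $2r < \dist(x,\partial^+U)$. Since $\rho \mapsto \nu_i(D_\rho(x))$ is non-decreasing, discarding the nonnegative first term in the monotonicity identity (\ref{eqn:mf1}) between radii $r$ and $2r$ gives $I_{\ve_i}(2r,x) - I_{\ve_i}(r,x) \ge \int_r^{2r}\rho^{-n}\nu_i(D_\rho(x))\,d\rho \ge c_n r^{1-n}\nu_i(D_r(x))$, with $c_n := \int_1^2 t^{-n}\,dt > 0$. For all but countably many $r$, $\mu$ charges neither sphere $\partial B_r(x)$ nor $\partial B_{2r}(x)$, so $I_{\ve_i}(\rho,x) \to \Theta_x(\rho) := \rho^{1-n}\mu(\ball{\rho}(x))$ for $\rho \in \{r,2r\}$; combining this with the lower semicontinuity $\nu(D_r(x)) \le \liminf_i \nu_i(D_r(x))$ yields $\nu(D_r(x)) \le c_n^{-1}r^{n-1}\big(\Theta_x(2r) - \Theta_x(r)\big)$. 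Passing the monotonicity formula to the limit (as already done for $\mu$) makes $\Theta_x$ non-decreasing, hence convergent as $r \downarrow 0$, so $\Theta_x(2r) - \Theta_x(r) \to 0$; filling in the intermediate radii by monotonicity of $\rho \mapsto \nu(D_\rho(x))$, we obtain $\Theta^{n-1}(\nu,x) = \lim_{r\downarrow0}\frac{\nu(D_r(x))}{\omega_{n-1}r^{n-1}} = 0$.

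Finally, combining Step 1 (so $\nu$ is carried by $\Sigma$) with the previous lemma ($\mathcal{H}^{n-1}(\Sigma) < \infty$) and the standard fact that a Radon measure with vanishing upper $(n-1)$-density at every point of a set of finite $\mathcal{H}^{n-1}$-measure gives that set mass zero (a Vitali covering argument), we get $\nu(\Sigma) = 0$, hence $\nu = 0$. I expect Step 2 to be the main obstacle: one must pass to the limit in the monotonicity identity with the usual care (choosing radii where $\mu$ does not charge a sphere, distinguishing open and closed discs), and --- the point that is genuinely new relative to \cite{ht} --- exploit that the \emph{potential} part of the monotonicity excess telescopes over dyadic scales and so is controlled by the total energy, forcing it to be small at most scales even though there is no equipartition of energy in this regime. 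Step 3 is then a routine covering argument, which is exactly why $\mathcal{H}^{n-1}(\Sigma)<\infty$ was proved beforehand, and Step 1 is a direct application of epsilon-regularity.
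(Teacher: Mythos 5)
Your proof is correct, and after the first step it diverges from the paper's. Your Step 1 (using epsilon regularity to get $\frac{1}{\ve_i}W(u_i)\leq C r^{-2}\ve_i$ locally uniformly away from $\Sigma$, hence $\supp\nu\subset\Sigma$) is exactly the paper's opening move. From there the paper finishes differently: it builds a countable disjoint covering of $\partial^0U\setminus\Sigma$ by discs $D_k$ on each of which the integral tends to $0$, notes $\mathcal{H}^n(\Sigma)=0$, and interchanges $\lim_i$ with $\sum_k$ by an appeal to dominated convergence. You instead extract quantitative decay of the limit measure at \emph{every} point from the potential term of the monotonicity identity (\ref{eqn:mf1}), obtaining $\nu(D_r(x))\leq c_n^{-1}r^{n-1}\bigl(\Theta_x(2r)-\Theta_x(r)\bigr)$ and hence $\Theta^{n-1}(\nu,x)=0$, and then kill $\nu\measurer\Sigma$ via the density-comparison/Vitali theorem together with $\mathcal{H}^{n-1}(\Sigma)<\infty$. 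The comparison is instructive: your route explicitly rules out concentration of the potential measure \emph{on} $\Sigma$ in the limit, which is the genuinely delicate point; the paper's dominated-convergence step handles this only implicitly, since no summable dominating sequence uniform in $i$ is exhibited (the natural bound $\int_{D_k}\frac{W(u_i)}{\ve_i}\,d\mathcal{H}^n\leq E_0$ does not sum over $k$). Your argument is therefore the more robust one, closer in spirit to the vanishing-discrepancy step in Hutchinson--Tonegawa, at the cost of invoking the density comparison theorem and the bookkeeping with good radii where $\mu$ does not charge spheres; all of those details check out as you have stated them.
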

\begin{proof}
    Recall that the set $\Sigma$ is closed. So for any $x\notin \Sigma$ and $r < \dist(x,\Sigma \cup \partial^+ U)$ we have by $(\ref{rel:concset0})$ and $(\ref{rel:concset})$, for all $i$ large enough , $r^{1-n} \mu_i(\ball r (x) ) \leq \eta_0$. Hence, epsilon regularity (\ref{epsreg}) gives 
$$\frac{1}{\ve_i} W(u_i)  \leq \frac{C}{r^2} \ve_i \quad \text{in } D_{r/4}(x) $$
and therefore
$$\int_{D_{r/4}(x)}\frac{W(\ui)}{\ve_i}\,d\mathcal{H}^n \leq Cr^{n-2}\ve_i \to 0  \quad \text{as }\ve_i \downarrow 0 $$
In view of this we consider a countable covering of $\partial^0U \backslash \Sigma$: for every $k\in \NN$, there are only finitely many points $\{x_{j,k}\}$, such that $D_{j,k}=D_{2^{-k}}(x_{j,k})$ is disjoint with all balls until the $(k-1)$ step, and $4\cdot 2^{-k} < \dist(x,\Sigma \cup \partial^+ U)$. Take $D_k = \cup_{j} D_{j,k}$(note that this is a finite union). By construction we have
$$ \coprod_{k \in \NN} D_k = \partial^0U \backslash \Sigma $$
and also that
$$ \lim_{i \to \infty} \int_{D_k} \frac{1}{\ve_i} W(u_i)  \mh = 0 \quad \text{for all } D_k   $$
Note that $H^{n-1}(\Sigma)< \infty$  therefore $H^n(\Sigma)=0$ and we have
\begin{align*}
    \lim_{i \to \infty} \int_{\partial^0 U} \frac{1}{\ve_i} W(u_i) \mh  &= \lim_{i \to \infty} \int_{\partial^0 U \backslash \Sigma} \frac{1}{\ve_i} W(u_i) \mh  \\
     &= \lim_{i \to \infty} \sum_{k} \int_{D_{k}} \frac{1}{\ve_i} W(u_i)  \mh \\
     &= 0 \quad \text{by Dominated Convergence Theorem}
\end{align*}

\end{proof}

We can now give a complete description of the limiting measure $\mu$ and it's relationship with $\Sigma$ and $u_*$. As Radon measures on $U \cup \partial^0U$, we have $\mu_i \rightharpoonup \mu $. However as the potential vanishes in the limit, we get
$$\frac{1}{2}|\nabla u_i|^2 \,dx \rightharpoonup \mu$$
Now, as we saw in the proof of lemma \ref{lem0}, by Fatou's lemma 
\begin{equation} \label{fatou}
\int_U \frac{1}{2}|\nabla u_*|^2 \,dx \leq \lim_{i \to \infty} \int_U \frac{1}{2}|\nabla u_i|^2 \,dx     
\end{equation}

For sets in the interior the equality holds, but in general it does not hold. We first prove a lemma describing the limit function $u_*$ on the boundary. 

\begin{prop} \label{onu*}
    As $\ve_i \to 0$, we have $u_i \to u_*$ in $C^{1,\alpha}_{loc}(U\cup \partial^0U \backslash \Sigma)$, for $\alpha \in (0,1)$. Further, $u_*=1$ or $-1$ on each connected component of $\partial^0U \backslash \Sigma$.
\end{prop}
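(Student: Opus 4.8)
The plan is to upgrade the interior convergence already in hand to convergence up to $\partial^0U\setminus\Sigma$ by rescaling each point of $\partial^0U\setminus\Sigma$ to unit scale and invoking the convergence lemma~\ref{lem:conv}. As a starting point, recall from the proof of Proposition~\ref{lem0} that the $u_i$ are harmonic in $U$ with $|u_i|\le1$, so $u_i\to u_*$ in $C^\infty_{loc}(U)$ — in particular in $C^{1,\alpha}$ on every $\overline{B_r(x)}\subset U$ — and, by the regularity corollary following (\ref{csmest}), each $u_i$ is smooth up to $\partial^0U$. Thus only the boundary points of $\partial^0U\setminus\Sigma$ need attention.

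First I would fix $x\in\partial^0U\setminus\Sigma$ and, using that $\Sigma$ is closed, choose $r<\dist(x,\Sigma\cup\partial^+U)$. By the construction of $\Sigma$ in (\ref{rel:concset0})--(\ref{rel:concset}) — just as in the proof of Proposition~\ref{vanishing} — one has $r^{1-n}\mu_i(\ball {r}(x))\le\eta_0$, i.e.\ $I_{\ve_i}(r,x)\le\eta_0$, for all $i$ large. Then the epsilon regularity estimate~(\ref{epsreg}), applied on $\ball {r}(x)$, gives
\[
\sup_{\ball {r/4}(x)}|\nabla u_i|^2\le \frac{C_0\eta_0}{r^2},\qquad
(1-u_i^2)^2=4W(u_i)\le \frac{C_0\eta_0}{r^2}\,\ve_i^2 \quad \text{on } D_{r/4}(x) .
\]
The second bound forces $|1-u_i^2|\le C\ve_i$ on $D_{r/4}(x)$, hence $|u_i|\ge 1/2$ there once $i$ is large, and since $D_{r/4}(x)$ is connected and $u_i$ continuous, $u_i$ keeps a fixed sign on it.

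Next I would rescale: fix a positive number $\rho$, independent of $i$, small enough that $\overline{\ball {\rho}(x)}\subset\ball {r/4}(x)$ and $\rho^2 C_0\eta_0/r^2\le4$, and set $\vi(y):=u_i(x+\rho y)$ for $y\in\ball 1\cup D_1$. Then $\vi$ is harmonic in $\ball 1$, $|\vi|\le1$, $|\nabla\vi|=\rho|\nabla u_i|\le2$ on $\ball 1\cup D_1$, $|\vi|\ge 1/2$ on $D_1$ for $i$ large, and $\vi$ solves $\partial_\nu\vi=\tfrac1{\tilde\ve_i}(\vi-\vi^3)$ on $D_1$ with $\tilde\ve_i:=\ve_i/\rho\to0$. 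These are precisely the hypotheses~(\ref{eqn:bdry}) of Lemma~\ref{lem:conv}, which then gives $\vi\to v_*$ in $C^{1,\alpha}_{loc}(\ball 1\cup D_1)$ for every $\alpha\in(0,1)$, with $v_*\equiv1$ or $v_*\equiv-1$ on $D_1$. Undoing the scaling yields $u_i\to u_*$ in $C^{1,\alpha}_{loc}(\ball {\rho}(x)\cup D_\rho(x))$, with $u_*$ identically $+1$ or $-1$ on $D_\rho(x)$. Covering a general compact $K\subset(U\cup\partial^0U)\setminus\Sigma$ by finitely many interior balls $B_r(x)\subset\subset U$ and boundary half-balls $\ball {\rho/2}(x)$ of this type, the local estimates patch to the asserted $C^{1,\alpha}_{loc}$ convergence. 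Finally, $u_*$ is continuous on $\partial^0U\setminus\Sigma$ and equals $\pm1$ on a relative neighborhood of each of its points, so $\{u_*=1\}$ and $\{u_*=-1\}$ are relatively open, disjoint and cover $\partial^0U\setminus\Sigma$; hence $u_*$ is constant, $+1$ or $-1$, on each connected component.

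The one genuinely delicate point — and the reason the argument must pass through Lemma~\ref{lem:conv} instead of simply quoting the Schauder estimate~(\ref{csmest}) — is that the boundary nonlinearity $\tfrac1{\ve_i}(u_i-u_i^3)$ is not bounded in $C^{1,\alpha}$ uniformly in $\ve_i$: its $u$-derivative $\tfrac1{\ve_i}(1-3u_i^2)$ has size $\sim1/\ve_i$ precisely where $u_i\approx\pm1$. So (\ref{csmest}) alone cannot yield a uniform $C^{2,\alpha}$ bound up to $D_1$; it is the barrier construction internal to Lemma~\ref{lem:conv} that supplies the $\ve$-independent Hölder control of $\nabla u_i$ at the boundary, and verifying that the rescaled data meet the hypotheses of that lemma uniformly in $i$ is the step requiring the most care.
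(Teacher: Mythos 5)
Your proposal is correct and follows essentially the same route as the paper: small energy at boundary points off $\Sigma$ (from the construction of $\Sigma$), then epsilon regularity plus the lower bound $|u_i|\ge 1/2$ on the boundary disc, then Lemma~\ref{lem:conv} to get $C^{1,\alpha}_{loc}$ convergence up to $\partial^0U$ with limit $\pm1$. You are more explicit than the paper about the rescaling needed to put the hypotheses of Lemma~\ref{lem:conv} into the normalized form $|\nabla v_i|\le 2$ on the unit half-ball (and you extract $|u_i|\ge 1/2$ from the potential bound in (\ref{epsreg}) rather than quoting the clearing-out lemma), but these are presentational differences, not a different argument.
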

\begin{proof}
    The proof in the interior was subsumed in proof for proposition \ref{lem0}. Therefore let $x \in \partial^0U \backslash \Sigma$. Let $r = \frac{1}{2} \min(\dist(x,\Sigma),\dist(x,\partial^+U))$. Then on $\ball r (x)$, for $i$ large enough all $\{\ui\}$ satisfy $I_{\ve_i}(r,x) < \eta_0$. Therefore by epsilon regularity (\ref{epsreg}) and clearing out (\ref{lemmaclearing}) applied in $\ball r (x)$, the functions $\{\ui\}$ satisfy all hypothesis of the convergence lemma \ref{lem:conv} in $\ball {r/4}(x) $, and we obtain $\ui \to u_*$ in $C^{1,\alpha}_{loc}(U\cup \partial^0U \backslash \Sigma)$ and $u_*=\pm 1$.  
\end{proof}

The following corollary says that the equality in (\ref{fatou}) holds for a subset $V$ of
$U$ if it does not touch $\Sigma$. 

\begin{coro} \label{propfatou}
    Let $V$ be an admissible open set in $U$ such that $\overline{V} \cap \Sigma = \phi$, then 
    $$\int_V \frac{1}{2}|\nabla u_*|^2 \,dx = \lim_{i \to \infty} \int_V \frac{1}{2}|\nabla u_i|^2 \,dx  $$
\end{coro}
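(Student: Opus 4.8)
The plan is to reduce the statement to a covering argument combined with the $C^1_{\mathrm{loc}}$ convergence away from $\Sigma$ furnished by Proposition \ref{onu*} (together with the interior convergence from Proposition \ref{lem0}). The key point is that equality in Fatou's inequality \eqref{fatou} is a \emph{local} phenomenon: it holds on any ball (half-ball, resp.) on which we have genuine local uniform convergence of $\nabla u_i$, and it fails only near $\Sigma$ where energy can escape. Since $\overline V\cap\Sigma=\phi$ and $\overline V$ is compact, $V$ stays a fixed positive distance from the "bad" set $\Sigma\cup\partial^+U$, so we can tile $V$ by controlled balls and sum.

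First I would set $d = \min\big(\dist(\overline V,\Sigma),\ \dist(\overline V,\partial^+U)\big) > 0$, which is positive by compactness of $\overline V$ and closedness of $\Sigma$. For each $x\in\overline V$ pick the ball/half-ball of radius $d/8$: if $x\in U$ (interior) then $B_{d/8}(x)\subset U$ and Proposition \ref{lem0}'s proof gives $u_i\to u_*$ in $C^1_{\mathrm{loc}}$ there, hence $\int_{B_{d/8}(x)}|\nabla u_i|^2\to\int_{B_{d/8}(x)}|\nabla u_*|^2$; if $x\in\partial^0 U$ then $\ball{d/8}(x)$ avoids $\Sigma$ and $\partial^+U$, so Proposition \ref{onu*} gives $u_i\to u_*$ in $C^{1,\alpha}_{\mathrm{loc}}(\ball{d/8}(x)\cup D_{d/8}(x))$ and again the energy integrals converge on, say, $\ball{d/16}(x)$. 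In either case we get an open neighborhood $N_x$ of $x$ (relatively open in $U\cup\partial^0U$) on which $\int_{N_x}|\nabla u_i|^2\,dx\to\int_{N_x}|\nabla u_*|^2\,dx$ and on which $|\nabla u_i|^2$ is uniformly bounded (by the $\varepsilon$-regularity Theorem, \eqref{epsreg}).

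Next I would extract a countable subcover: $\overline V$ is covered by $\{N_x\}_{x\in\overline V}$, and since $U\cup\partial^0U$ is second countable we may pick a countable subcollection $\{N_{x_k}\}_{k\in\NN}$ covering $\overline V$, then pass to a measurable partition $V = \coprod_k A_k$ with $A_k\subset N_{x_k}$ (e.g. $A_k = V\cap N_{x_k}\setminus\bigcup_{j<k}N_{x_j}$). On each $A_k$ we have $\int_{A_k}|\nabla u_i|^2\to\int_{A_k}|\nabla u_*|^2$: indeed this is just the convergence of the restricted measures $|\nabla u_i|^2\,dx\measurer N_{x_k} \rightharpoonup |\nabla u_*|^2\,dx\measurer N_{x_k}$ (which follows from the full convergence on $N_{x_k}$ plus the uniform $L^\infty$ bound on $|\nabla u_i|^2$ there, the latter upgrading weak-$*$ convergence of the densities from the $C^1_{\mathrm{loc}}$ statement), evaluated on the Borel set $A_k$ whose $\mu$-boundary we may assume has measure zero after a harmless adjustment of the radii. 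Summing over $k$ and invoking dominated convergence — the partial sums $\sum_{k\le K}\int_{A_k}|\nabla u_i|^2$ are dominated by $\int_V|\nabla u_i|^2\le 2E_0$ uniformly — gives $\int_V|\nabla u_i|^2\to\sum_k\int_{A_k}|\nabla u_*|^2 = \int_V|\nabla u_*|^2$, which is the claim.

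The main obstacle is the bookkeeping in the last paragraph: converting the \emph{local} $C^1$-convergence statements into convergence of the energy \emph{integrals} over the pieces $A_k$, and justifying the interchange of the sum over $k$ with the limit $i\to\infty$. The cleanest route is to avoid partitions altogether and instead exhaust $V$ from inside by finitely many of the $N_{x_k}$: for $\delta>0$ choose a finite subcover $N_{x_1},\dots,N_{x_K}$ of the compact set $\{z\in\overline V: \dist(z,\partial V)\ge\delta\}$, note $\int_{\bigcup_{k\le K}N_{x_k}\cap V}|\nabla u_i|^2\to\int_{\bigcup_{k\le K}N_{x_k}\cap V}|\nabla u_*|^2$ by finite additivity after an inclusion–exclusion on the $N_{x_k}$ (each intersection again lies in some $N_{x_j}$ where we have uniform convergence), and then let $\delta\to 0$ using $\int_{V\setminus V_\delta}|\nabla u_i|^2\le C|V\setminus V_\delta|$ uniformly in $i$ (again from \eqref{epsreg}, since a neighborhood of $\overline V$ is energy-subcritical) together with the same bound for $u_*$. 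This makes all limits finite-sum limits and sidesteps any dominated-convergence subtlety.
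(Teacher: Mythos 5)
Your proposal is correct and follows essentially the same route as the paper: reduce to the two local cases (interior balls, where the $C^{\infty}_{loc}$ convergence of the bounded harmonic functions $u_i$ gives equality, and boundary half-balls avoiding $\Sigma$, where Proposition \ref{onu*} gives uniform $C^{1,\alpha}$ convergence), and then patch these together with a covering argument, which the paper itself only sketches. One simplification worth noting: since $\overline{V}$ is compact, a finite subcover of the neighborhoods $N_{x_k}$ suffices and on each piece $A_k$ the convergence $|\nabla u_i|^2 \to |\nabla u_*|^2$ is uniform, so the integrals converge directly; this renders the weak-$*$ measure convergence, the boundary-of-$A_k$ adjustment, and the dominated-convergence interchange in your last two paragraphs unnecessary.
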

\begin{proof}
   We first assume that $V$ is a ball centered at $x$ and split the proof into two cases.
   
   Case 1: Let $V = B_r(x) \subset U$. In this case the conclusion was already established in (\ref{intconvu}) while proving lemma \ref{lem0}.$$
    \int_{B_r(x)} |\nabla u_*|^2 \,dx = \lim_{i \to \infty} \int_{B_r(x)} |\nabla u_i|^2 \,dx $$
   
   Case 2: When $V = \ball r (x)$. Due to the assumption $\overline{V} \cap \Sigma = \phi $, $x \in   \partial^0U \backslash \Sigma $ and as $\Sigma$ is closed, there is a half ball $\ball r (x)$ disjoint from $\Sigma$, such that we have uniform $C^{1,\alpha}(\overline{\ball {r/4}(x))}$ convergence $u_i$ to $u_*$ by (\ref{onu*}) hence $$\int_{\ball {r/4}(x)} |\nabla u_*|^2\,dx = \lim_{i \to \infty} \int_{\ball {r/4}(x)} |\nabla u_i|^2\,dx $$
   as desired, in this case as well. 
   Together, these two cases imply the result for every admissible open set $V$ by a covering argument similar to the one used in proof of Theorem \ref{vanishing}
\end{proof}

The above two results show us that the obstruction to equality in (\ref{fatou}) is the set $\Sigma$. That is the lack of compactness in $H^1(U)$, i.e inequality in (\ref{fatou}) is due to the fact that when the energy of $\{u_i\}$ is high, then the tendency of $|u_i|$ to converge to $1$, as $\ve_i \downarrow 0$, leads to loss of energy in the singular limit. This energy loss however is captured in the energy concentration set $\Sigma$. Indeed we may rewrite (\ref{fatou}) as,
\begin{equation} \label{measdecom}
    \frac{1}{2}|\nabla u_i|^2 \,dx \rightharpoonup \mu= \frac{1}{2}|\nabla u_i|^2 \,dx + \mu_{\Sigma}
\end{equation} 
where $\mu_{\Sigma}$ is the defect measure that arises if there is a failure of strong convergence in $H^1(U)$ as explained above. The following result gives a complete description of $\mu_{\Sigma}$.

\begin{theo}\label{thm:sing}
    The measure $\mu_{\Sigma}$ has the following properties. 
    \begin{enumerate}
        \item It is supported in the energy concentration set $\Sigma$.
        \item $\Theta^{n-1}(\mu_{\Sigma},x)=\Theta^{n-1}(\mu, x)$ for $\mathcal{H}^{n-1}$ a.e. on $\Sigma$. 
        \item Further writing $\theta(x) = \Theta^{n-1}(\mu_{\Sigma},x)$, we have  $$\mu_{\Sigma}=\theta \, \mathcal{H}^{n-1}\measurer \Sigma$$ with $\eta_0 \leq \theta(x) \leq C < \infty$, for $\mathcal{H}^{n-1}$ a.e. $x \in \Sigma$.
    \end{enumerate}

\end{theo}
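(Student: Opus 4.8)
The plan is to establish the three properties in sequence, each building on the previous. For (1), I would argue that $\mu_\Sigma$ vanishes on $U \cup \partial^0 U \setminus \Sigma$. Given a point $x \notin \Sigma$, Corollary~\ref{propfatou} (extended slightly to cover a small half-ball or ball around $x$ disjoint from $\Sigma$) shows that $\int_V \frac12|\nabla u_i|^2\,dx \to \int_V \frac12|\nabla u_*|^2\,dx$, so there is no defect on $V$; since $\mu_\Sigma$ is the weak limit of $\frac12|\nabla u_i|^2\,dx - \frac12|\nabla u_*|^2\,dx$ (a nonnegative measure by Fatou, cf. (\ref{fatou})), and $\partial^0 U\setminus\Sigma$ is covered by countably many such $V$, we get $\mu_\Sigma(U\cup\partial^0 U\setminus\Sigma)=0$, i.e. $\spt\mu_\Sigma \subset \Sigma$.

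For (2), observe that by (1) together with (\ref{measdecom}), we have on a neighborhood of any $x\in\Sigma$ the identity $\mu = \frac12|\nabla u_*|^2\,dx + \mu_\Sigma$ with both terms supported respectively away from and on $\Sigma$. Since $\mathcal{H}^{n-1}(\Sigma)<\infty$ and $\Sigma\subset\RR^n$ has $\mathcal{H}^n(\Sigma)=0$, the absolutely continuous part $\frac12|\nabla u_*|^2\,dx$ carries no $(n-1)$-dimensional density at $\mathcal{H}^{n-1}$-a.e.\ point of $\Sigma$: indeed $\frac{1}{r^{n-1}}\int_{B_r(x)}\frac12|\nabla u_*|^2\,dx \le \frac{1}{r^{n-1}}\int_{B_r(x)}\frac12|\nabla u_*|^2\,dx$, and since $\Sigma$ has finite $(n-1)$-measure while $|\nabla u_*|^2\,dx$ is an $L^1_{loc}$ function times $(n+1)$-dimensional Lebesgue measure, Lebesgue differentiation forces $\lim_{r\to 0} \frac{1}{r^{n-1}}\int_{B_r(x)}|\nabla u_*|^2\,dx = 0$ for $\mathcal{H}^{n-1}$-a.e.\ $x\in\Sigma$ — this is a standard density estimate comparing an $n+1$-dimensional $L^1$ density against an $(n-1)$-dimensional set. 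Hence $\Theta^{n-1}(\mu,x) = \Theta^{n-1}(\mu_\Sigma,x)$ for $\mathcal{H}^{n-1}$-a.e.\ $x\in\Sigma$.

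For (3), the lower bound $\theta(x)\ge\eta_0$ is immediate from (\ref{rel:concset}) and (2): every $x\in\Sigma$ has $\Theta^{n-1}(\mu,x)\ge\eta_0$, and this equals $\theta(x)$ a.e. The upper bound $\theta(x)\le C$ follows from the monotonicity formula: for small $r$, $\frac{1}{r^{n-1}}\mu(\ball r(x)) \le \frac{1}{\rho^{n-1}}\mu(\ball\rho(x)) \le E_0 \rho^{1-n}$ for a fixed $\rho$, giving a uniform bound $C = C(n,E_0,U)$. Finally, to get $\mu_\Sigma = \theta\,\mathcal{H}^{n-1}\measurer\Sigma$, I would invoke a Radon–Nikodym / density comparison theorem (e.g.\ the general result that if $\mu_\Sigma$ is a Radon measure supported on a set $\Sigma$ of finite $\mathcal{H}^{n-1}$-measure with $0<\eta_0\le\Theta^{n-1}(\mu_\Sigma,x)\le C<\infty$ for $\mathcal{H}^{n-1}$-a.e.\ $x\in\Sigma$, then $\mu_\Sigma \ll \mathcal{H}^{n-1}\measurer\Sigma$ and $\mu_\Sigma = \Theta^{n-1}(\mu_\Sigma,\cdot)\,\mathcal{H}^{n-1}\measurer\Sigma$; see e.g.\ Simon's GMT notes or Mattila). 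The two-sided density bound is exactly the hypothesis needed for this comparison, and upper semicontinuity of the density plus the structure of $\Sigma$ ensures $\mu_\Sigma$ is concentrated on the set where the density is positive.

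The main obstacle is the rectifiability-free density argument in step (3): extracting $\mu_\Sigma = \theta\,\mathcal{H}^{n-1}\measurer\Sigma$ from two-sided density bounds requires care, since a priori $\Sigma$ is only known to have finite $\mathcal{H}^{n-1}$-measure, not to be rectifiable. One must either cite the precise measure-theoretic comparison lemma (upper density bound $\Rightarrow$ absolute continuity with respect to $\mathcal{H}^{n-1}\measurer\Sigma$, lower density bound $\Rightarrow$ the Radon–Nikodym derivative is the density and $\mathcal{H}^{n-1}\measurer\Sigma$-essentially supported on $\Sigma$) or reprove it; the delicate point is that $\mathcal{H}^{n-1}\measurer\Sigma$ need not itself have density $1$ $\mathcal{H}^{n-1}$-a.e.\ without rectifiability, so one works with $\mu_\Sigma$ directly via Vitali covering and the definition of Hausdorff measure. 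I expect this to be the only genuinely technical part; steps (1) and (2) are routine given the epsilon-regularity and the earlier compactness results.
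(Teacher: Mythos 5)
Your proposal is correct and follows essentially the same three-step structure as the paper: part (1) via Corollary \ref{propfatou} and a covering of the complement of $\Sigma$, part (2) by showing the absolutely continuous part $\frac12|\nabla u_*|^2\,dx$ has vanishing $(n-1)$-density $\mathcal{H}^{n-1}$-a.e., and part (3) from the two-sided density bounds plus a Radon--Nikodym/differentiation argument. The one place you diverge is the justification of part (2): the paper cites a capacity-type estimate for $H^1$ functions (Ziemer, eq.\ (3.3.38)), which gives $\lim_{r\to 0} r^{1-n}\int_{B_r(x)}|\nabla u_*|^2 = 0$ outside an $\mathcal{H}^{n-1}$-null set of $U$, whereas you argue measure-theoretically from the finiteness of $\mathcal{H}^{n-1}(\Sigma)$. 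Your route works, but be careful with the phrase ``Lebesgue differentiation'': since $\Sigma$ is Lebesgue-null, the Lebesgue differentiation theorem says nothing about $\mathcal{H}^{n-1}$-a.e.\ points of $\Sigma$; the correct tool is the upper-density comparison theorem for Radon measures (if $\Theta^{*\,n-1}(\nu,\cdot)\ge\lambda$ on a Borel set $A$ then $\mathcal{H}^{n-1}(A)\le C\lambda^{-1}\nu(A)$, applied to $\nu=|\nabla u_*|^2\,dx$ which vanishes on $\Sigma$), which is what your ``standard density estimate'' remark amounts to. Your closing caution about part (3) --- that without rectifiability one must use the density bounds on $\mu_\Sigma$ directly rather than differentiate against $\mathcal{H}^{n-1}\measurer\Sigma$ --- is well taken and is in fact glossed over more quickly in the paper than in your proposal.
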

\begin{proof}
Let $x \in U \cup \partial^0U \backslash \Sigma$. Then there is a ball(or half ball for boundary point) $B_x$ containing $x$, and disjoint from $\Sigma$, by corollary \ref{propfatou}, $$\mu(B_x)= \int_{B_x} |\nabla u_*|^2 \,dx$$
and therefore by (\ref{measdecom}), $\mu_{\Sigma}(B_x)=0$, for all such $x$, therefore $spt(\mu_{\Sigma}) \subset \Sigma$. This proves part (1). 

Next, as $u_* \in H^1(U)$ and note that $U \subset \RR^{n+1}$ therefore by equation (3.3.38) in \cite{z}, the $(n-1)$-density of $u_*$ is $\mathcal{H}^{n-1}$ a.e. $0$ i.e we have $$ \lim _{r \to 0}\frac{1}{r^{n-2}}\int_{B_r(x)} |\nabla u_*|^2\,dx = 0 \quad \mathcal{H}^{n-1} a.e. $$
In particular this implies part (2) i.e 
$$\Theta^{n-1}(\mu_{\Sigma},x)=\Theta^{n-1}(\mu, x)  \quad \mathcal{H}^{n-1} a.e. \text{ } x \in \Sigma $$
For part (3), recall that by definition $\eta_0 \leq \Theta^{n-1}(\mu, x)$ for all $x$ on $\Sigma$. The upper bound follows from monotoncity formula. Combined with part (2), this gives 
$$\eta_0 \leq \Theta_{\Sigma}^{n-1}(\mu, x) < C \quad \mathcal{H}^{n-1} a.e. \text{ } x\in \Sigma  $$
Therefore is absolute continous with respect to $\mathcal{H}^{n-1}\measurer \Sigma$ and by Radon-Nikodyn Theorem we get part (3), i.e  $\mu_{\Sigma} = \theta \, \mathcal{H}^{n-1}\measurer \Sigma$.
\end{proof}

\section{The limiting varifold}

\subsection{Rectifiable varifolds } We define rectifiable varifolds and  some notions related to them that we require in this paper. For a thorough treatment of rectifiable varifolds we refer the reader to chapter 4 in \cite{si}.
\begin{defi}
A set $\Sigma \subset \RR^{n+1}$ is $k$-rectifiable if and only if $\Sigma \subset \cup_{i=0}^{\infty} \Sigma_i$, where $\mathcal{H}^k(\Sigma_0)=0$ and for $i\geq 1$, each $\Sigma_i$ is an embedded $k$-dimensional  $C^1$ submanifold in $\RR^{n+1}$ \end{defi}

\begin{rem}
    An important property of $k$-rectifiable sets that we will be using is the existence of (approximate) tangent space $T_x \Sigma$ for $\mathcal{H}^k$ a.e point $x \in \Sigma$.
\end{rem}

We denote by $Gr_{k,n+1}$ the $k$- grassmann manifold that contains unoriented $k$ planes in $\RR^{n+1}$, and identify each subspace will the (symmetric) matrix of orthogonal projection on it. Then a $k$- varifold in $U \subset \RR^{n+1}$ is a Radon measure on $U \times Gr_{k,n+1}$. For our purpose we are interested in the smaller class of rectifiable varifolds, namely

\begin{defi}
Let $\Sigma$ be a $k$-rectifiable set in an open set $U \subset \RR^{n+1}$ and a positive Borel function $\theta: \Sigma \to \RR$. Then $V=V(\Sigma, \theta)$ is a $k$-rectifiable varifold, with multiplicity function $\theta$, given by 
\begin{equation*}
    \langle V(\Sigma,\theta),f \rangle = \int_{\Sigma} \theta(x) f(T_x \Sigma) \,d\mathcal{H}^k_x \quad \text{for any } f\in C^0(Gr_{k,n+1})
\end{equation*}
Further, if the multiplicity function $\theta \in \NN \backslash \{0\}$,   $\mathcal{H}^k$ a.e. on $\Sigma$, then $V$ is called \textit{integral} or \textit{integer rectifiable}. 
\end{defi} 
\begin{defi}
For any $k$-rectifiable varifold $V(\Sigma,\theta)$, the associated mass measure $\mu_V$ is a Radon measure given by,
$$ 
\mu_V(A) = \int_{A \cap \Sigma} \theta \,d\mathcal{H}^k \quad \text{ for any }\mathcal{H}^k \text{ measurable set } A 
$$
and the mass of $V$ is 
$$
\textbf{M}(V) = \mu_V(\RR^{n+1})= \int_{\Sigma} \theta \,d\mathcal{H}^k
$$
Note that this is just the $k$-area functional. We denote by $\Phi_{\#}V$ the pushforward of $V$ by a diffeomorphism $\Phi$.
\end{defi}
Given any vector field $X \in C_c^1(U, \RR^{n+1})$, denote by $\{\Phi_t\}$ the one-parameter family of diffeomorphisms generated by it. We can now define the first variation of $V$.
\begin{defi}
    Let $V(\Sigma,\theta)$ be a rectifiable $k$- varifold and $X \in C_c^1(U, \RR^{n+1})$, then the first variation of $V$ is 
    \begin{equation}\label{fvari}
      \langle \delta V , X \rangle = \frac{d}{dt}\bigg|_{t=0}\textbf{M}((\Phi_{t})_{\#}V) = \int_{\Sigma} \theta \D_{\Sigma} X \,d\mathcal{H}^k  
    \end{equation}
     
    The varifold is said to be \textit{stationary} if $\langle \delta V, X \rangle =0$ for all $X$.
\end{defi}
\begin{rem}
     Note that the stationarity condition just states that $V(\Sigma,\theta)$ is a critical point of the $k$-area functional under pushforwards by diffeomorphisms. Therefore, it is reasonable to think of stationary rectifiable varifolds as a weak notion of minimal surfaces. 
\end{rem}

\subsection{Generalized Varifolds}
In several geometric problems involving energy concentration, the stress energy tensors associated to the problem are not varifolds and therefore, it is necessary to consider a larger space than the space of varifolds to deal with such problems.  For this purpose, Ambrosio and Soner introduced the notion of \textit{generalized varifold} in \cite{as} in their study of parabolic ginzburg-landau equations. Precisely, instead of looking at Radon measures in $U \times Gr_{k,n+1}$, they take Radon measures in $U \times A_{k,n+1}$, where $A_{k,n+1}$ compared to $Gr_{k,n+1}$ is a slightly larger subset of symmetric matrices, namely 
$$A_{k,n+1}= \{A \in Sym_{n+1} \, | \, tr(A)=k, -(n+1) I_{n+1} \leq A \leq I_{n+1}\}$$

Generalized varifolds enjoy similar compactness properties as varifolds and therefore one obtains that a limiting generalized varifold on energy concentration. Since introduction, they have been used in several works like \cite{lw,ms} and recently in the work of Pigati and Stern \cite{ps} on minmax construction of codimension-$2$ integer rectifiable stationary varifolds.

\begin{defi}
  A generalized $k$-varifold in $U \subset \RR^{n+1}$ is a nonnegative Radon measure $V$ in $U \times A_{k,n+1}$.
\end{defi}

The notions of mass measure, first variation, stationarity and density extend in a straightforward manner from varifolds  to generalized varifolds. We refer to section 3 in \cite{as} for details. 

\subsection{The associated generalized varifolds}
We now see that the stress energy tensors associated to any $\ue$, satisfying (\ref{sec4eqn}) are $(n-1)$-generalized varifolds. Note that we will just write $A_{n-1}$ to denote $A_{n-1,n+1}$ The stress energy tensor corresponding to $\ue$ is
\begin{equation} \label{varifold}
V_\ve \coloneqq \frac{1}{2}|\nabla \ue|^2 \, T_x  
\end{equation}
where $T_x$ is given by
\begin{equation} \label{defT}
    T_x =
    \begin{cases}
        [I_{n+1} - 2 \nu_\ve \otimes \nu_\ve] \text{ if }|\nabla \ue (x)| \neq 0, \text{ here } \nu_\ve(x) = \frac{\nabla \ue (x)}{|\nabla \ue (x)|} \quad  \\
        0  \quad \text{if }|\nabla \ue (x)|=0
    \end{cases}
\end{equation}
Observe that 
$$
tr(T_x)=n-1 \text{ and }  -|v|^2 \leq \langle T_x(v),v \rangle \leq |v|^2 \quad \text{for any } v
$$
therefore $T_x \in A_{n-1}$ whenever $|\nabla \ue| \neq 0$ and we have the following definition. 
\begin{defi}
    Let $u_{\ve}$ satisfy (\ref{sec4eqn}). Then, there is an associated $(n-1)$ generalized varifold given by
    \begin{equation} \label{eqn:gvarifold}
    \langle V_\ve,f \rangle = \int_{|\nabla \ue|\neq 0} \frac{1}{2}|\nabla \ue|^2 f(T_x) \,dx \quad \text{for all } f\in C^0(A_{n-1}) 
\end{equation}
\end{defi}

There is also the related notion of convergece. 
\begin{defi}
    We say that a sequence of generalized varifolds $\{V_i\}$ converges to a generalized varifold $V$ if $\{V_i\}$ converge as Radon measures in $U \times A_{n-1}$ to $V$.
\end{defi}

\subsection{Rectifiablity of the limiting varifold} We will prove that there is a (classical) varifold with mass measure $\mu_{\Sigma}$ supported on $\Sigma$, and that it is stationary and rectifiable. Let $V_\ve$ be the generalized varifold associated to $\ue$ by (\ref{eqn:gvarifold}). We will show that by Theorem \ref{thm:sing},  $V_\ve \rightharpoonup V = V_* + V_{\Sigma}$, and the mass measure of $V_\Sigma$ is $\mu_{\Sigma}$. Further as a consequence of Proposition \ref{vanishing} we will show that $V$ is stationary. We are also able to show that $V_*$ and $V_{\Sigma}$ are stationary. As we already established a lower and upper bounds on density $\theta(x)$ for $\mathcal{H}^{n-1}$ a.e. $x \in \Sigma$ (Theorem \ref{thm:sing}(3)),  we will invoke a result in \cite{as} to conclud that $V_{\Sigma}$ is actually a stationary, rectifiable, $(n-1)$-varifold and in particular $\Sigma$ is a $(n-1)$-rectifiable set. 

\begin{theo}
Let $u_i$ satisfy (\ref{sec4eqn}) and $V_i$ be the generalized varifolds associated to it. Then as $i \to \infty$, we have the following,
\begin{enumerate}
    \item After passing to a subsequence, $V_i \rightharpoonup V= V_* + V_{\Sigma}$, here $V_*$ is the generalized varifold associated to $u_*$, and $V_{\Sigma}$ is a generalized varifold supported on $\Sigma \times A_{n-1}$ with mass measure $\mu_{\Sigma}$.
    \item $V$, $V_*$ and $V_{\Sigma}$ are stationary generalized varifolds. 
    \item And further $V_{\Sigma}$ can also be associated to a stationary, rectifiable varifold, with density $\theta(x)$ and supported on a $(n-1)$ rectifiable set $\Sigma$, i.e $V_{\Sigma} = V(\Sigma, \theta)$.
\end{enumerate}
\end{theo}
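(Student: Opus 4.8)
\emph{Approach and the decomposition.} The plan is to combine the structural results of Section~4 with the stress--energy identity (\ref{eqn:fv}) and then to invoke the Ambrosio--Soner rectifiability theorem. Since the mass measures of the $V_i$ are $\frac12|\nabla u_i|^2\,dx$, of total mass $\le E_0$, the sequence $\{V_i\}$ is bounded in the Radon measures on $(U\cup\partial^0U)\times A_{n-1}$, so after passing to a subsequence $V_i\rightharpoonup V$ for some generalized $(n-1)$-varifold $V$ whose mass measure is the weak limit of $\frac12|\nabla u_i|^2\,dx$, which by (\ref{measdecom}) and Proposition~\ref{vanishing} equals $\mu=\frac12|\nabla u_*|^2\,dx+\mu_\Sigma$. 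Let $V_*$ be the generalized varifold associated to $u_*$ by (\ref{eqn:gvarifold}). On the open set $\Omega:=(U\cup\partial^0U)\setminus\Sigma$, Proposition~\ref{onu*} gives $u_i\to u_*$ in $C^1_{loc}(\Omega)$, hence $\frac12|\nabla u_i|^2\to\frac12|\nabla u_*|^2$ and the matrices $T_x$ of (\ref{defT}) converge locally uniformly on $\Omega$ wherever $|\nabla u_*|\neq0$; testing against $f\in C_c(\Omega\times A_{n-1})$ and using dominated convergence gives $\langle V_i,f\rangle\to\langle V_*,f\rangle$, so $V$ and $V_*$ agree on $\Omega\times A_{n-1}$. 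Since $\mathcal H^{n-1}(\Sigma)<\infty$, the set $\Sigma$ is Lebesgue null in $\RR^{n+1}$, so the (absolutely continuous) mass measure of $V_*$ does not charge $\Sigma$; therefore $V_*=V\measurer(\Omega\times A_{n-1})$ and $V_\Sigma:=V-V_*=V\measurer(\Sigma\times A_{n-1})$ is a nonnegative Radon measure supported in $\Sigma\times A_{n-1}$ with mass measure $\mu\measurer\Sigma=\mu_\Sigma$. This gives part (1).

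\emph{Stationarity.} For $X\in C^1_c(U\cup\partial^0U,\RR^{n+1})$ with $X\cdot e_{n+1}=0$ on $\partial^0U$, the identity (\ref{eqn:fv}) applied in $U$ with $u=u_i$ reads
$$\langle\delta V_i,X\rangle=\int_U\Big(\tfrac12|\nabla u_i|^2\Div X-DX\langle\nabla u_i,\nabla u_i\rangle\Big)\,dx=-\frac{1}{\ve_i}\int_{\partial^0U}W(u_i)\,\Div_{\RR^n}X\mh.$$
The left side converges to $\langle\delta V,X\rangle$ since $V_i\rightharpoonup V$ and the first-variation integrand $(x,A)\mapsto\langle A,DX(x)\rangle$ is continuous with compact support, while the right side tends to $0$ by Proposition~\ref{vanishing}; hence $\langle\delta V,X\rangle=0$. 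For $V_*$ the inner-variation identity for the harmonic function $u_*$ gives $\langle\delta V_*,X\rangle=-\int_{\partial^0U}(\nabla u_*\cdot X)\,\partial_\nu u_*\mh$, and this vanishes: by Proposition~\ref{onu*} the trace of $u_*$ is locally constant ($=\pm1$) on $\partial^0U\setminus\Sigma$, so its tangential gradient vanishes there and $\nabla u_*\cdot X=0$ on $\partial^0U\setminus\Sigma$ because $X$ is tangential, while $\mathcal H^n(\Sigma)=0$; the integration by parts across the $\mathcal H^{n-1}$-finite set $\Sigma$ is justified by a cutoff argument. Consequently $\langle\delta V_*,X\rangle=0$ and then $\langle\delta V_\Sigma,X\rangle=\langle\delta V,X\rangle-\langle\delta V_*,X\rangle=0$, which gives part (2).

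\emph{Rectifiability of $V_\Sigma$.} By part (2), $V_\Sigma$ is a stationary --- in particular locally of bounded first variation --- generalized $(n-1)$-varifold, and by Theorem~\ref{thm:sing}(3) its mass measure is $\mu_\Sigma=\theta\,\mathcal H^{n-1}\measurer\Sigma$ with $\eta_0\le\theta(x)\le C$ for $\mathcal H^{n-1}$-a.e.\ $x\in\Sigma$, so the lower $(n-1)$-density of $\mu_\Sigma$ is positive $\mu_\Sigma$-a.e. The rectifiability theorem for generalized varifolds of Ambrosio and Soner \cite{as} --- the analogue of Allard's rectifiability theorem --- then applies to the restriction of $V_\Sigma$ to the set of positive density, which is $\mu_\Sigma$-a.e.\ all of $\Sigma$: it shows that $\Sigma$ is countably $(n-1)$-rectifiable, that the disintegration of $V_\Sigma$ is carried by the approximate tangent planes $T_x\Sigma$, and hence that $V_\Sigma=V(\Sigma,\theta)$ is a classical rectifiable $(n-1)$-varifold, which is stationary because its first variation coincides with that of the generalized varifold. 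This proves part (3).

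\emph{Main obstacle.} The substantive input is the Ambrosio--Soner rectifiability theorem used in part (3): it is what converts the purely measure-theoretic information about $\mu_\Sigma$ (positive density a.e., finite $\mathcal H^{n-1}$-mass) together with stationarity into genuine rectifiability of $\Sigma$ and of $V_\Sigma$. The remaining delicate point is the justification of the integration by parts defining $\delta V_*$ near the concentration set in part (2), which uses that $\Sigma$ has finite $\mathcal H^{n-1}$-measure together with $u_*\in H^1(U)$; everything else is weak-$*$ convergence bookkeeping.
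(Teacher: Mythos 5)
Your proposal is correct and follows the same overall skeleton as the paper: stationarity of $V$ via the stress--energy identity (\ref{eqn:fv}) together with Proposition \ref{vanishing}, stationarity of $V_*$ via the boundary integration by parts and the fact that $u_*\equiv\pm1$ is locally constant on $\partial^0U\setminus\Sigma$ while $X$ is tangential, and finally the Ambrosio--Soner rectifiability theorem (Theorem 3.8(c) of \cite{as}) to upgrade $V_\Sigma$ to a stationary rectifiable $(n-1)$-varifold. The one place where you genuinely diverge is part (1): you obtain the decomposition $V=V_*+V_\Sigma$ by locality, i.e.\ by showing $V=V_*$ on $((U\cup\partial^0U)\setminus\Sigma)\times A_{n-1}$ from the $C^1_{loc}$ convergence of Proposition \ref{onu*} and then setting $V_\Sigma=V\measurer(\Sigma\times A_{n-1})$, whereas the paper derives the splitting inside the first-variation computation and in doing so passes the quadratic term $\int_U DX\langle\nabla u_i,\nabla u_i\rangle\,dx$ to the limit ``by weak $H^1$ convergence'' --- a step that is not valid as literally stated (weak convergence does not commute with quadratic expressions) and is really underwritten by exactly the strong local convergence you invoke. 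Your route is therefore the cleaner one, since it also makes $\delta V_\Sigma=\delta V-\delta V_*$ a triviality of linearity rather than something extracted from a limit identity. The remaining soft spot, which you correctly flag and which the paper shares, is the justification of the integration by parts $\langle\delta V_*,X\rangle=-\int_{\partial^0U}(\nabla u_*\cdot X)\,\partial_\nu u_*\mh$ near $\Sigma$, where $u_*$ need not be $C^1$ up to the boundary; the cutoff argument using $\mathcal H^{n-1}(\Sigma)<\infty$ and $u_*\in H^1(U)$ should be spelled out, but this is a technical rather than conceptual gap.
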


\begin{proof}
We first compute the first variation of $\delta V_i$. For any vector field $X\in C^1(U\cup \partial^0U)$ with $X_{n+1}=0$, the first variation is
\begin{align*}
    \langle \delta V_i, X \rangle
    &= \int_{U} \langle V_i , DX \rangle  dx  \\
    &= \int_U \frac{1}{2}|\nabla u_i|^2\langle T_x, DX \rangle \,dx \\
    &= \int_U \frac{1}{2}|\nabla u_i|^2\D X - 2DX(\nabla u_i, \nabla u_i)  \,dx \text{ by equation (\ref{defT})} \\
    &= \int_{\partial^0 U} \frac{-W(u_i)}{\ve_i} \D_{\RR^n} X \mh \quad \text{by equation (\ref{eqn:fv})} \\
    &\leq  \sup_{\partial^0 U}|\D_{\RR^n} X| \cdot \int_{\partial^0 U} \frac{W(u_i)}{\ve_i}  \mh  \quad \text{as } W \geq 0 \\
    &\leq C_{X} \ve_i \quad \text{by Theorem \ref{vanishing}}
\end{align*}
As $\mu_{V_i}(U) < E_0$, after passing to a subsequence there is a generalized varifold $V$ such that $V_i \rightharpoonup V$. In particular we have
\begin{align}
    \langle \delta V, X \rangle 
    &= \lim_{i \to \infty} \langle \delta V_i, X \rangle \\
    &=0   \label{Tsta}
\end{align}
Hence, $V$ is stationary. Further note that,
\begin{align*}
    \langle \delta V, X \rangle 
    &= \lim_{i\to \infty}\int_{U} \langle V_i , DX \rangle  dx \\
    &=\lim_{i \to \infty} \frac{1}{2} \int_{U}|\nabla \ui|^2 \langle I_{n+1} - 2 \nu_i \otimes \nu_i , DX \rangle \,dx  \\
    &=\lim_{i \to \infty} \frac{1}{2} \int_{U}|\nabla \ui|^2 \D X - 2DX\langle\nabla \ui,\nabla \ui\rangle \,dx  \\
\end{align*}
Note that by (\ref{measdecom}) we get
$$
\lim_{i \to \infty} \int_{U}|\nabla \ui|^2 \D X \,dx = \int_U |\nabla u_*|^2 \D X \,dx + \int_{\Sigma} \D_{\Sigma} X \,d\mu_{\Sigma}
$$
and because $\ui \rightharpoonup u_*$ weakly in $H^1(U)$ we get
$$
\lim_{i \to \infty}  \int_{U} DX\langle\nabla \ui,\nabla \ui\rangle \,dx = \int_U DX\langle\nabla u_*, \nabla u_*\rangle \,dx
$$
Combining these we resume the calculation
\begin{align*} 
    \langle \delta V, X \rangle  
    &= \frac{1}{2} \int_{U}|\nabla u_*|^2 \D X \,dx - 2DX\langle\nabla u_*,\nabla u_*\rangle \,dx+ \int_{\Sigma} \D_{\Sigma} X \,d\mu_{\Sigma}  \\
    &= \frac{1}{2} \int_{U}|\nabla u_*|^2 \langle I_{n+1} - 2 \nu_* \otimes \nu_* , DX \rangle \,dx + \int_{\Sigma} \D_{\Sigma} X \,d\mu_{\Sigma}  \\
    &=  \int_{U} \frac{1}{2}|\nabla u_*|^2 \langle T_x , DX \rangle \,dx + \int_{\Sigma} \D_{\Sigma} X \,d\mu_{\Sigma}                  \\
    &= \int_{U} \frac{1}{2}|\nabla u_*|^2 \langle T_x , DX \rangle \,dx + \int_{\Sigma} \theta(x) \D_{\Sigma} X \,d\mathcal{H}^{n-1} \text{ by Theorem \ref{thm:sing}}(3) \\
    &= \langle \delta V_*,X \rangle + \langle \delta V_{\Sigma},X \rangle 
\end{align*}
 Combined with equation (\ref{Tsta}), we get
$$\langle \delta V_{\Sigma},X \rangle  = -\langle \delta V_*,X \rangle$$
 We now show that $V_{\Sigma}$ is stationary. By the above, it is equivalent to showing that $\langle \delta V_*, X \rangle = 0$.
\begin{align*}
    \langle \delta V_{\Sigma},X \rangle 
    &= -\langle \delta V_*, X \rangle \\
    &= -\int_{U} \bigg( \frac{|\nabla u_*|^2}{2} \D X - DX\langle\nabla u_*,\nabla u_*\rangle \bigg)\,dx  \\
    &=-\int_{\partial^0U} (\nabla u_* \cdot X)\frac{\partial u_*}{\partial \nu} \mh \quad \text{by 2nd term in equation (\ref{ibp})} \\
    &= -\int_{\partial^0U \backslash \Sigma}\nabla(\pm 1)\cdot X \pdv{u_*}{\nu} \mh \text{ Proposition \ref{onu*} and } \mathcal{H}^n(\Sigma)=0   \\
    &= 0 
\end{align*}

 Hence, $V_{\Sigma}$ is stationary. We already have lower density bound $\mathcal{H}^{n-1}$ a.e. on $\Sigma$ for the mass measure $\mu_{\Sigma}$,  then by [Theorem 3.8(c)] in \cite{as}, $V_{\Sigma}$ is a stationary, $(n-1)$ rectifiable varifold with mass measure $\mu_{\Sigma}$, supported on $\Sigma$ . In particular, the set $\Sigma$ is $(n-1)$ rectifiable.
\end{proof}



\begin{thebibliography}{9}
\bibitem{abs}
Alberti, G., Bouchitte,G., Seppecher,S. \textit{Phase transition with the line-tension effect}, Arch. Rational Mech. Anal. \textbf{144} (1998), 1–46.
\bibitem{as}
Ambrosio, L., Soner, H., \textit{A measure theoretic approach to higher codimension mean curvature
flow}, Ann. Sc. Norm. Sup. Pisa, Cl. Sci. (1-2) \textbf{25}(1997), 27-49.

\bibitem{cc}
Caffarelli, L., Cordoba, A. , \textit{Uniform convergence of a singular perturbation problem}, Comm.
Pure Appl. Math. \textbf{48} (1995), 1–12.

\bibitem{cm}
Chodosh, O., Mantoulidis, C., \textit{Minimal surfaces and the AllenCahn equation on 3-manifolds: index, multiplicity, and curvature estimates.}
Ann. of Math. (2), \textbf{191(1)} (2020) 213–328,

\bibitem{crs}
Caffarelli, L., Roquejoffre, J., Savin, O.,  \textit{Nonlocal minimal surfaces}, Comm. Pure Appl. Math. \textbf{63} (2010), 1111–1144.

\bibitem{cs}
Caffarelli, L., Silvestre, L., \textit{An extension problem related to the fractional Laplacian}, Comm. Partial Differential Equations \textbf{32} (2007), 1245–1260.

\bibitem{csm}
Cabré, X., Solà-Morales, J. , \textit{Layer solutions in a half-space for boundary reactions}. Comm. Pure Appl. Math., \textbf{58}, (2005) 1678-1732.

\bibitem{csv}
Cinti, E., Serra, J., Valdinoci, E.  \textit{Quantitative flatness results and BV-estimates for stable nonlocal minimal surfaces}, Journal of Differential Geometry. \textbf{112(3)}, (2019). 447-504

\bibitem{cw}
 Chen, Y., Struwe, M.,  \textit{Existence and partial regularity results for the heat flow for harmonic maps}, Math. Z. \textbf{201} (1989), 83–103.

\bibitem{fs}
Figalli, A., Serra, J., \textit{On stable solutions for boundary reactions: a De Giorgi-type result in dimension $4 + 1$}. Invent. math. \textbf{219} (2020), 153–177 

\bibitem{g}
Guaraco, M., \textit{Min–max for phase transitions and the existence of embedded minimal hypersurfaces}.
J. Differential Geom., \textbf{108(1)} (2018) 91–133

\bibitem{gg}
Gaspar, P., Guaraco, M., \textit{The Allen–Cahn equation on closed manifolds}. Calc. Var. \textbf{57}, 101 (2018).

\bibitem{gm}
Garroni, A., Muller, S., \textit{$\Gamma$-limit of a phase-field model of dislocation}, SIAM J. Math. Anal.
\textbf{36} (2005), no. 6, 1943–1964.

\bibitem{gt}
Gilbarg, D.,Trudinger, G. , \textit{Elliptic Partial Differential Equations of Second Order}, Classics in Mathematics, Springer-Verlag,
Berlin (2001).

\bibitem{ht}
Hutchinson, J., Tonegawa, Y.  \textit{Convergence of phase interfaces in the van der Waals-Cahn-Hilliard theory}, Calc. Var. Partial Differential Equations \textbf{10}, (2000), 49–84.

\bibitem{i}
 Ilmanen,T., \textit{Convergence of the Allen-Cahn equation to Brakke’s motion by mean
curvature}, J. Differential Geom, \textbf{38} (1993), pp. 417–461

\bibitem{k}
Kurzke, M., \textit{Boundary vortices in thin magnetic films}. Calc. Var. Partial Differential Equations
\textbf{26} (2006), no. 1, 1–28.

\bibitem{lw}
Lin, F., Wang, C.,  \textit{Harmonic and quasi-harmonic spheres, part III. Rectifiablity of the parabolic defect measure and generalized varifold flows}. Annales de l'I.H.P. Analyse non linéaire, \textbf{ 19} (2002) no. 2, 209-259.

\bibitem{m}
Modica, L., \textit{The gradient theory of phase transitions and the minimal interface criterion}, Arch. Rational Mech. Anal. \textbf{98} no.2 (1987), 123–142.

\bibitem{mm}
Modica, L. ,Mortola, S. , \textit{Un esempio di $\Gamma$-convergenza (Italian)}, Boll. Un. Mat. Ital. B (5)
\textbf{14 }(1977), 285–299.

\bibitem{ms}
Millot, V., Sire, Y. \textit{On a Fractional Ginzburg–Landau Equation and 1/2-Harmonic Maps into Spheres}.Arch Rational Mech Anal \textbf{215},(2015), 125–210. 

\bibitem{msw}
Millot, V., Sire, Y.,Wang, K. \textit{Asymptotics for the Fractional Allen–Cahn Equation and Stationary Nonlocal Minimal Surfaces}. Arch Rational Mech Anal \textbf{231}(2019), 1129–1216.

\bibitem{p}
Pitts, J.,\textit{ Existence and regularity of minimal surfaces on Riemannian manifolds}, volume 27 of \textit{Mathematical
Notes}. Princeton University Press, Princeton, N.J.; University of Tokyo Press, Tokyo, 1981.

\bibitem{ps}
Pigati, A., Stern, D., \textit{Minimal submanifolds from the abelian Higgs model}. Invent. Math. \textbf{223} (2021),no.3,1027–1095.

\bibitem{sv}
Savin, O., Valdinoci, E. \textit{$\Gamma$ 
-convergence for nonlocal phase transitions}. Ann. Inst. H. Poincaré Anal. Non Linéaire \textbf{29},(2012) 479–500.

\bibitem{sv1}
Savin, O., Valdinoci, E. \textit{Density estimates for a variational model driven by the Gagliardo
norm}. J. Math. Pures Appl. \textbf{101} (2014), 1–26.

\bibitem{svp}
Palatucci, G., Savin, O., Valdinoci, E. \textit{Local and global minimizers for a variational energy involving a fractional norm}. Annali di Matematica \textbf{192}, (2013), 673–718

\bibitem{s}
Schoen, R., \textit{Analytic Aspects of the Harmonic Map Problem}. In: Chern, S.S. (eds) Seminar on Nonlinear Partial Differential Equations. Mathematical Sciences Research Institute Publications, vol 2.(1984). Springer, New York, NY.

\bibitem{si}
Simon, L., \textit{Lectures on geometric measure theory}, Proceedings of the Centre for Mathematical Analysis, Australian National University, Centre for Mathematical Analysis, Canberra (1983).

\bibitem{st}
Sternberg, P. \textit{The effect of a singular perturbation on nonconvex variational problems}, Arch. Rational Mech. Anal. \textbf{101} no. 3 ,(1988), 209–260.

\bibitem{t}
Tian, G., \textit{Gauge theory and calibrated geometry}, Ann. of Math., \textbf{151} (2000) 193–268

\bibitem{to}
Toland, J., \textit{The Peierls-Nabarro and Benjamin-Ono equations}, J. Funct. Anal. \textbf{145} (1997),
136–150.

\bibitem{ton}
Tonegawa, Y. \textit{On stable critical points for a singular perturbation problem}, Comm. Anal. Geom. \textbf{13}, no. 2, (2005), 439–459.

\bibitem{tw}
Tonegawa, Y.,Wickramasekera, N.,\textit{Stable phase interfaces in the van der Waals Cahn Hilliard theory}. J. Reine Angew. Math., \textbf{668}, ( 2012), 191–210.

\bibitem{w}
Wang, C., \textit{Limits of solutions to the generalized Ginzburg-Landau functional}, Comm. Partial
Diff. Eq., vol. \textbf{27}, no. 5-6 (2002), 877–906.

\bibitem{z}
Ziemer, W., \textit{Weakly differentiable functions}, Graduate Texts in Mathematics, Springer-Verlag, New York (1989).


\end{thebibliography}
\end{document}